\documentclass[arxiv=true]{agn_article}
\usepackage[markall=true]{agn_all}

\usepackage{enumitem}

\newcommand{\Wiso}{W_{\textrm{\rm iso}}}

\newcommand{\Wvol}{W_{\textrm{\rm vol}}}

\newcommand{\Wex}{W_0}

\newcommand{\WADM}{W_{\mathrm{ADM}}}
\newcommand{\WADMp}{W_{\mathrm{ADM}^+}}
\newcommand{\WA}{W_{\mathrm{A}}}
\newcommand{\WS}{W_{\mathrm{S}}}
\newcommand{\WSp}{W_{\mathrm{S}^+}}
\newcommand{\ghatS}{\ghat_{\mathrm{S}}}

\newcommand{\Mbb}{\mathbb{M}}
\newcommand{\quoteesc}[1]{{\textnormal{[#1]}}}
\newcommand{\quoteescnobr}[1]{{\textnormal{#1}}}

\newcommand{\curve}{X}
\newcommand{\svmap}{\lambdahat}
\newcommand{\hunordered}{h}

\renewcommand{\lambdahat}{\widehat{\smash{\lambda}\vphantom{2}}}
\renewcommand{\lambdatilde}{\widetilde{\smash{\lambda}\vphantom{2}}}
\renewcommand{\hhat}{\widehat{\smash{h}\vphantom{2}}}

\providecommand{\citet}[2][]{\citeauthor{#2} \cite[#1]{#2}}

\graphicspath{{images/}}
\addbibresource{bib_agn.bib}
\addbibresource{bib_main.bib}

\begin{document}
\newgeometry{scale=0.785}

\title{\vspace*{-.98cm}A rank-one convex, non-polyconvex isotropic function on $\GLp(2)$ with compact connected sublevel sets}
\date{\today}

\knownauthors[voss]{voss,ghiba,martin,neff}

\maketitle

\begin{abstract}
	\noindent
	According to a 2002 theorem by Cardaliaguet and Tahraoui,
	an isotropic, compact and connected subset of the group $\GLp(2)$ of invertible $2\times2$\,--\,matrices is rank-one convex if and only if it is polyconvex. In a 2005 \emph{Journal of Convex Analysis} article by Alexander~Mielke, it has been conjectured that the equivalence of rank-one convexity and polyconvexity holds for \emph{isotropic functions} on $\GLp(2)$ as well, provided their sublevel sets satisfy the corresponding requirements. We negatively answer this conjecture by giving an explicit example of a function $W\col\GLp(2)\to\R$ which is not polyconvex, but rank-one convex as well as isotropic with compact and connected sublevel sets.
\end{abstract}

\vspace*{1.4em}
\noindent{\textbf{Key words:} quasiconvexity, rank-one convexity, polyconvexity, nonlinear elasticity, hyperelasticity, weak lower semi-continuity, isotropic sets, calculus of variations}
\\[1.4em]
\noindent\textbf{AMS 2010 subject classification:
	26B25,	
	26A51,	
	74B20	
}
\tableofcontents
%
%
%
%
\section{A conjecture on rank-one convexity and polyconvexity}

Generalized notions of convexity play an important role in the multidimensional calculus of variations. In order to ensure the existence of minimizers for functionals of the form
\begin{align}
	I\col W^{1,p}(\Omega;\R^n)\to\R\,,\qquad \varphi\mapsto I(\varphi) = \int_{\Omega} W(\grad\varphi(x))\,\dx
\end{align}
in the Sobolev space $W^{1,p}(\Omega)$ with $\Omega\subset\R^n$ under various boundary conditions, weak lower semicontinuity (w.l.s.c.) of $I$ is usually required. While (classical) convexity and lower semicontinuity of the function $W\col\Rnn\to\R$ on the set of \nnmatrices\ are certainly sufficient for $I$ to be w.l.s.c., convexity is oftentimes too strong as requirement for practical applications. Therefore, weaker convexity properties have been introduced which in many cases suffice to ensure the weak lower semicontinuity and thus the existence of minimizers as well. In particular, Morrey \cite{morrey1952quasi} famously showed that, under appropriate conditions, $I$ is w.l.s.c.\ if and only if $W$ is \emph{quasiconvex}, i.e.\ if
\begin{equation}\label{eq:definitionQuasiconvexity}
	\int_{\Omega}W(F_0+\nabla \vartheta)\,\dx\geq \int_{\Omega}W(F_0)\,\dx=W(F_0)\cdot \abs{\Omega}
	\qquad\tforall F\in\Rnn
\end{equation}
for every bounded open set $\Omega\subset\R^n$ and all test functions $\vartheta\in W_0^{1,\infty} (\Omega;\R^n)$. Further generalizations of the convexity of functions have been subsequently introduced, with two of the most important ones being \emph{rank-one convexity}, i.e.
\begin{equation}\label{eq:definitionRankOneConvexity}
	W((1-t)F+t(F+H)) \;\leq\; (1-t)\.W(F) + t\.W(F+tH)
\end{equation}
for all $F\in\Rnn$, $t\in[0,1]$ and $H\in\Rnn$ with $\rank(H)=1$, and \emph{polyconvexity} of $W$ which, for $n=2$ and $n=3$, can be expressed as
\begin{align}
	W(F) = P(F,\det F) \qquad&\text{for a convex function }\;P\col\R^{2\times2}\times\R \;\cong\; \R^5\to\R_\infty\label{eq:definitionPolyconvexityTwoDimensional}\\[-0.7em]
\intertext{and}\notag\\[-2em]
	W(F) = P(F,\Cof F,\det F) \qquad&\text{for a convex function }\;P\col\R^{3\times3}\times\R^{3\times3}\times\R \;\cong\; \R^{19}\to\R_\infty\label{eq:definitionPolyconvexityThreeDimensional}
\end{align}
respectively, where $\Cof F$ denotes the cofactor of $F$ and $\R_\infty=\R\cup \{\infty\}$. Major interest in these two concepts stems from the observation that polyconvexity implies quasiconvexity, which in turn implies rank-one convexity \cite{Dacorogna08,rindler2018calculus}. Since quasiconvexity is difficult to verify or falsify for a given function $W$ on $\Rnn$, the availability of a sufficient and a necessary criterion has proven quite useful in the past. However, for $n\geq3$, these three notions of convexity are generally not equivalent \cite{sverak1992rank}. For $n=2$, on the other hand, it is still an open question whether rank-one convexity implies quasiconvexity \cite{morrey1952quasi,ball1987does,ball2002openProblems,conti2005rank,parry1995planar,parry2000rank,pedregal1998note}. In fact, many classes of functions on $\R^{2\times2}$ have been identified for which rank-one convexity even implies polyconvexity \cite{Ball84b,muller1999rank,agn_martin2015rank,agn_ghiba2018rank,agn_martin2019quasiconvex} and thus quasiconvexity.

\medskip

In a 2005 \emph{Journal of Convex Analysis} article \cite{mielke2005necessary}, Alexander Mielke put forward a further conjecture on the relation between rank-one convexity and polyconvexity in the planar case.

\begin{conjecture}[Mielke \cite{mielke2005necessary}]\label{conjecture:mielke}
	Let $W\col\GLp(2)\to \R_{\infty}$ be an objective, isotropic and rank-one convex function such that the sublevel sets
	\[
		S_c \colonequals \{F\in\R^{2\times2} \setvert \det F > 0\,,\; W(F) \leq c\}, \qquad c\in \R
	\]
	are connected and compact. Then $W$ is polyconvex.
\end{conjecture}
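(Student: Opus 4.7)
The natural line of attack is to reduce the function-level conjecture to the set-level Cardaliaguet--Tahraoui theorem quoted in the introduction, and then to reassemble a polyconvex representation of $W$ from polyconvex data on each sublevel set. My plan is: (i) verify that every sublevel set $S_c$ satisfies the hypotheses of Cardaliaguet--Tahraoui; (ii) use that theorem to lift each $S_c$ to a convex set $L_c \subset \R^5$ via the embedding $T\col F \mapsto (F,\det F)$; (iii) define $P(y) \colonequals \inf\{c \in \R \mid y \in L_c\}$ on $\R^5$ and argue that it is a convex function realising $W(F) = P(F,\det F)$.

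For step (i), compactness, connectedness, and the inclusion $S_c \subset \GLp(2)$ are given or immediate; isotropy of $S_c$ is inherited from isotropy of $W$; and rank-one convexity of $S_c$ as a subset of $\R^{2\times2}$ follows from rank-one convexity of $W$ as a function, since any rank-one segment between two points of $S_c$ is mapped into $(-\infty,c]$ by $W$. Cardaliaguet--Tahraoui then yields polyconvexity of $S_c$, i.e.\ $T^{-1}(\mathrm{conv}\,T(S_c)) \cap \GLp(2) = S_c$, and I would set $L_c \colonequals \mathrm{conv}\,T(S_c) \subset \R^5$; the family $(L_c)_c$ is nested, and each $L_c$ is convex and compact (convex hull of a compact set in finite dimension).

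For step (iii), the identity $W(F) = P(F,\det F)$ on $\GLp(2)$ falls out directly from polyconvexity of the $S_c$: the inequality $P(F,\det F) \le W(F)$ uses $F \in S_c \Rightarrow T(F) \in L_c$, while the inequality $W(F) \le P(F,\det F)$ uses polyconvexity of $S_{c'}$ for every $c' > P(F,\det F)$. The construction also makes each sublevel set $\{P \le c\} = \bigcap_{c'>c} L_{c'}$ convex, so $P$ is automatically \emph{quasiconvex} on $\R^5$.

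The main obstacle I foresee is upgrading quasiconvexity of $P$ to full convexity. Mere convexity of each individual $L_c$ does not yield $P(\lambda y_1 + (1-\lambda)y_2) \le \lambda P(y_1) + (1-\lambda)P(y_2)$; what is needed is an interlevel compatibility of the form ``$y_i \in L_{c_i}$ implies $\lambda y_1 + (1-\lambda)y_2 \in L_{\lambda c_1 + (1-\lambda)c_2}$'', which Cardaliaguet--Tahraoui applied level by level does not supply. My plan would be to try to extract this compatibility from the full rank-one convexity of $W$ along rank-one segments that cross several level sets simultaneously---for instance by studying how the support functions of the $L_c$ depend on $c$ and aiming to show this dependence is itself convex in $c$. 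It is precisely at this interlevel interpolation step that the conjecture could plausibly fail, and the success of the approach hinges on whether rank-one convexity of $W$ as a function (rather than of its sublevel sets individually) is strong enough to enforce the required interpolation.
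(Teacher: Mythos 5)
You are attempting to prove a statement that is false: Conjecture \ref{conjecture:mielke} is refuted in this very paper (Proposition \ref{prop:conjectureFalse}) by the explicit counterexample
\begin{equation*}
	\Wex\col\GLp(2)\to\R\,,\qquad \Wex(F) = \frac{\opnorm{F}^2}{\det F} - \log\biggl(\frac{\opnorm{F}^2}{\det F}\biggr) + \log(\det F) + \frac{1}{\det F}\,,
\end{equation*}
which is objective, isotropic and rank-one convex with compact, connected sublevel sets, yet not polyconvex; the failure of polyconvexity is exhibited via \v{S}ilhav\'{y}'s criterion (Theorem \ref{polycrit}) evaluated at $\gamma=(e^4,e^3)$, $\nu=(e,1)$, where no admissible coupling constant $c$ in the prescribed interval exists. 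Consequently, no completion of your argument can succeed.

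That said, your diagnosis of where the difficulty sits is exactly right, and the salvageable part of your proposal reproduces a true result from the paper. Steps (i)--(ii) are sound: each sublevel set $S_c$ is isotropic, compact, connected and rank-one convex, so Proposition \ref{proposition:rankOnePolyEquivalenceCompactConnectedSetsGLp} makes each $S_c$ polyconvex --- this is precisely Corollary \ref{corollary:qConvexityEquivalence}, i.e.\ $W$ is q-polyconvex (note that the paper deliberately says \enquote{q-convex} for functions with convex sublevel sets, reserving \enquote{quasiconvex} for Morrey's notion, so your statement that $P$ is \enquote{automatically quasiconvex} should be read in that q-convex sense). The irreparable gap is the one you yourself flagged in step (iii): q-polyconvexity does not imply polyconvexity, because convexity of each lifted level body $L_c$ individually gives no control over how the $L_c$ interpolate across levels, and the counterexample $\Wex$ demonstrates that rank-one convexity of $W$ as a function is \emph{not} strong enough to enforce the interlevel compatibility $y_i\in L_{c_i}\Rightarrow \lambda y_1+(1-\lambda)\.y_2\in L_{\lambda c_1+(1-\lambda)c_2}$, however one tries to extract it from rank-one segments or support functions. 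In other words, the function-level conjecture fails exactly where the set-level theory of Cardaliaguet--Tahraoui and Conti et al.\ (Propositions \ref{proposition:cardaliaguetMainResult}--\ref{proposition:contiSOinvariantSets}) stops: the correct conclusion obtainable from your setup is the level-set statement of Corollary \ref{corollary:qConvexityEquivalence}, and the correct response to the conjecture itself is a disproof, not a proof.
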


We note that although the requirement of compact sublevel sets is not explicitly included in the original statement of the conjecture \cite{mielke2005necessary}, it can be inferred from the context, especially since Mielke excludes an otherwise viable counterexample by Aubert despite the connectedness of the corresponding sublevel sets (cf.~\eqref{eq:aubertEnergy} and Appendix~\ref{appendix:aubertConnectedness}).

Conjecture \ref{conjecture:mielke}\footnote{%
	Mielke also later clarified his originally intended formulation \cite{mielkePersonalCommunication}. The original conjecture \cite{mielke2005necessary} is phrased as follows, with the additional requirement of compactness added here for clarity:
	\begin{quote}
	{\it%
		However, there is even further similarity between rank-one convexity and polyconvexity for isotropic functions which stems from the theory in \quoteescnobr{\cite{aubert1987counterexample,cardaliaguet2002equivalence1,cardaliaguet2002equivalence2}}. There it is shown that compact, connected, and isotropic subsets of $\{A\in \R^{2\times 2} \setvert \det A>0\}$ are rank-one convex if and only if they are polyconvex. So we conjecture that all isotropic, rank-one convex functions are in fact polyconvex, if all the sublevel sets $\{F\in \R^{2\times 2} \,\setvert\, \det F>0\,,\; \Phi(\Lambda(F))\leq t\}\,,\;t\in \R$, are \quoteesc{compact and} connected. The last condition rules out the famous counterexample by Aubert in \cite{aubert1987counterexample} given via
		\begin{equation*}
			\Phi(\nu)=\frac{1}{3} \left(\nu_1^4+\nu_2^4\right)+\frac{1}{2}\nu_1^2\nu_2^2-\frac{2}{3} \nu_1\nu_2 \left(\nu_1^2+\nu_2^2\right)\,.
		\end{equation*}%
	}
	\end{quote}
	Here, Mielke uses the notation $W(F)=\Phi(\Lambda(F))$ for the representation of an isotropic energy $W\col\GLp(2)\to\R$ in terms of the ordered singular values $\Lambda_1(F)\geq\Lambda_2(F)$ of $F$.
} is mainly motivated by analogous results relating the polyconvexity and rank-one convexity of sets (cf.~Proposition \ref{proposition:rankOnePolyEquivalenceCompactConnectedSetsGLp}). In Section \ref{section:counterExample}, however, we will show that the function (cf.~eq.~\eqref{eq:counterexampleDefinition})
\[
	\Wex\col\GLp(2)\to\R\,,\qquad \Wex(F) = \frac{\opnorm{F}^2}{\det F} - \log\biggl(\frac{\opnorm{F}^2}{\det F}\biggr) + \log(\det F) + \frac{1}{\det F}
	\,,
\]
where $\opnorm{\..\.}$ denotes the operator norm, is indeed rank-one convex and isotropic with compact and connected sublevel sets, but not polyconvex. We will thereby establish the following proposition.
\begin{proposition}
\label{prop:conjectureFalse}
	Conjecture \ref{conjecture:mielke} does not hold.
\end{proposition}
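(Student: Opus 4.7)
The proof reduces to verifying the four asserted properties of $\Wex$: isotropy, compact and connected sublevel sets, rank-one convexity, and failure of polyconvexity. I would begin with a singular-value reduction. Writing $\Lambda_1 \geq \Lambda_2 > 0$ for the singular values of $F \in \GLp(2)$, one has $\opnorm{F}^2 = \Lambda_1^2$ and $\det F = \Lambda_1\Lambda_2$, so the substitution $K = \Lambda_1/\Lambda_2$ and $D = \Lambda_1\Lambda_2$ produces the additive split
\[
    \Wex(F) = h(K) + g(D), \qquad h(K) = K - \log K, \qquad g(D) = \log D + \tfrac{1}{D}.
\]
Isotropy is immediate, since $K$ and $D$ are each invariant under $F \mapsto Q F R^T$ with $Q, R \in \mathrm{SO}(2)$. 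Both $h$ on $[1,\infty)$ and $g$ on $(0,\infty)$ are coercive and unimodal with common minimum value $1$ at $K = D = 1$, so for every $c \geq 2$ the $(K,D)$-sublevel set is a bounded region of the form $\{1 \leq K \leq K_+(c),\; D_-(K) \leq D \leq D_+(K)\}$ with continuous bounds, hence compact and connected. Transferring through the diffeomorphism $(\Lambda_1,\Lambda_2) \leftrightarrow (K,D)$ on $\{\Lambda_1 \geq \Lambda_2 > 0\}$ and then through the proper singular-value map $\GLp(2)\to\{\Lambda_1\geq\Lambda_2>0\}$ with connected fibres yields compactness and connectedness of $S_c$ in $\GLp(2)$.

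The main technical obstacle is rank-one convexity. I would apply the Knowles--Sternberg characterization for isotropic functions on $\GLp(2)$, which reduces rank-one convexity of $\Phi(\Lambda_1,\Lambda_2) = h(K) + g(D)$ to separate convexity in each $\Lambda_i$, the Baker--Ericksen monotonicity inequality, and two mixed-derivative conditions involving $\Phi_{12}$ and $\sqrt{\Phi_{11}\Phi_{22}}$. The additive split makes all partial derivatives of $\Phi$ explicit via the chain rule, so each condition becomes a rational inequality in $(\Lambda_1,\Lambda_2)$. The delicate point is that $h''(K) > 0$ on $[1,\infty)$ while $g''(D) = (2 - D)/D^3$ changes sign at $D = 2$, so the cross-coupling must compensate for the concave contribution of $g$ when $D > 2$. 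A case analysis separating $D \leq 2$ (where everything is a sum of convex functions) from $D > 2$ (where only an elementary rational manipulation remains) should close this step.

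For the final non-polyconvexity, I would exploit a pointwise second-order obstruction: polyconvexity of $\Wex$ would force, at every $F_0 \in \GLp(2)$ where $\Wex$ is smooth, the existence of a scalar $\gamma_0\in\R$ such that the quadratic form $H \mapsto D^2\Wex(F_0)(H,H) - 2\gamma_0 \det H$ is positive semidefinite on all of $\R^{2\times 2}$, a condition strictly stronger than the Legendre--Hadamard inequality underlying rank-one convexity. I would evaluate the Hessian at a diagonal $F_0 = \mathrm{diag}(r_0,s_0)$ with $r_0 \neq s_0$ and $r_0 s_0$ in the concavity interval $(2,\infty)$ of $g$, express the resulting quadratic form in the natural orthonormal basis of $\R^{2\times 2}$ (trace, traceless-diagonal, symmetric off-diagonal, antisymmetric), and show by direct eigenvalue analysis that no scalar $\gamma_0$ can simultaneously dominate the positive contribution of $h(K)$ in the shearing directions and the negative contribution of $g''(r_0 s_0)$ in the volumetric direction. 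The residual obstacle is the explicit choice of $(r_0,s_0)$ that makes the eigenvalue analysis tight; the heuristic is that the concavity of $g$ at the chosen determinant value must carry enough negative weight to break the required dominance, whereas rank-one convexity — being a one-dimensional condition along rank-one directions — is structurally too weak to feel this obstruction.
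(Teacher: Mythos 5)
Your reduction to singular values, the treatment of isotropy, and the compactness/connectedness argument via the $(K,D)$-coordinates are all sound and essentially equivalent to the paper's Lemmas on volumetric-isochorically split energies; your rank-one convexity plan via Knowles--Sternberg is also the route the authors themselves indicate as an alternative (they instead invoke a ready-made criterion for split energies), although you leave the actual verification --- which turns out to be tight, with equality in one of the conditions --- unexecuted.

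The fatal problem is your non-polyconvexity step. The ``pointwise second-order obstruction'' you propose can never be violated by a rank-one convex $C^2$ function on $\GLp(2)$: by the classical theorem on quadratic forms over $\R^{2\times 2}$ (Terpstra; see also Dacorogna), a quadratic form $Q$ satisfying $Q(a\otimes b)\geq 0$ for all $a,b\in\R^2$ always admits a constant $\gamma_0$ with $Q(H)-2\gamma_0\det H\geq 0$ on all of $\R^{2\times 2}$. Since rank-one convexity of $\Wex$ forces each Hessian $D^2\Wex(F_0)$ to be such a form, the scalar $\gamma_0$ you claim cannot exist in fact always exists --- in the isotropic setting this is visible directly, because the Knowles--Sternberg inequalities are precisely the statement that the admissible intervals for $\gamma_0$ coming from the diagonal and off-diagonal blocks of the Hessian intersect. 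Your closing heuristic is therefore backwards: at the level of second derivatives at a single point, rank-one convexity in two dimensions is exactly strong enough to produce the local polyconvexity certificate. The failure of polyconvexity of $\Wex$ is an intrinsically \emph{global} phenomenon: for polyconvexity one needs a single $c$ (depending on $F_0$) such that $\Wex(F)\geq \Wex(F_0)+\langle D\Wex(F_0),F-F_0\rangle + c\det(F-F_0)$ for \emph{all} $F\in\GLp(2)$, and the obstruction is that the interval of $c$'s permitted by the local (second-order) data at $F_0$ is disjoint from the set of $c$'s required by the inequality at a distant point. This is exactly how the paper argues, using \v{S}ilhav\'{y}'s criterion with the far-apart pair $\gamma=(e^4,e^3)$ and $\nu=(e,1)$: the interval $\bigl[-\tfrac{1+e^8}{e^{14}},\,-\tfrac{1+e-3e^8+e^9}{e^{14}(1+e)}\bigr]$ of locally admissible $c$'s lies strictly above the threshold that the global inequality at $\nu$ demands. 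To repair your argument you must replace the Hessian analysis by such a two-point (global) test.
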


In the following, we will provide some of the background which originally motivated Conjecture \ref{conjecture:mielke} as well as some basic results required for demonstrating that $\Wex$ has the claimed properties.
%
%
%
%
\section{Background and preliminaries}

While the generalized notions of convexity discussed above are defined for (and applied to), first and foremost, \emph{functions} on $\Rnn$, analogous definitions are sometimes considered for \emph{sets} of matrices as well.

\begin{definition}\label{def:generalizedConvexitySets}
	Let $\Mbb\subset\Rnn$. Then $\Mbb$ is called \emph{rank-one convex} if $(1-t)F_1+tF_2\in \Mbb$ for all $t\in[0,1]$ and all $F_1,F_2\in \Mbb$ such that $\rank(F_2-F_1)= 1$. For $n=2$, the set $\Mbb\subset\R^{2\times2}$ is called \emph{polyconvex} if $\Mbb=\{F\in\R^{2\times2} \setvert P(F,\det F)\leq0\}$ for some convex function $P\col\R^{2\times2}\times\R\cong\R^5\to\R_\infty$.
\end{definition}

Of course, the definition of polyconvex sets can easily be extended \cite{Dacorogna08,szekelyhidi2006local} to the case of arbitrary dimension $n$. Note that Definition \ref{def:generalizedConvexitySets} of rank-one convex subsets is used by Cardaliaguet and Tahraoui \cite{cardaliaguet2000equivalence,cardaliaguet2002equivalence1,cardaliaguet2002equivalence2} as well as by Dacorogna \cite{Dacorogna08}, while Conti et al.\ \cite{conti2005rank} call subsets with the stated properties \emph{laminated convex} subsets of $\R^{2\times 2}$. 

The rather recent development of these notions of convexity for sets has, again, been primarily motivated by applications in the calculus of variations. In particular, it has been shown that if a compact set $\Mbb\subset\R^{2\times2}$ is polyconvex and $\OO(2)$-invariant (see Remark \ref{remark:isotropyHemitropyOfSets}), then the set
\[
	\mathcal{M} = \{ \varphi\in W^{1,\infty}(\Omega;\R^n) \setvert \grad\varphi(x)\in \Mbb \;\text{ for }x\in\Omega \;\text{ a.e.}\}
\]
is weakly-$\ast$ closed; furthermore, if $\mathcal{M}$ is weakly-$\ast$ closed, then $\Mbb$ is rank-one convex \cite{cardaliaguet2000equivalence,cardaliaguet2002equivalence1}.

In the context of nonlinear hyperelasticity, where generalized convexity properties have an especially long and rich history \cite{ball1976convexity,chelminski2006new,knowles1976failure,knowles1978failure,rosakis1994,agn_schroder2010poly}, the material behaviour of an elastic solid is described by a potential energy function
\[
	W\col\GLp(n)\to\R\,,\quad F\mapsto W(F)
\]
defined on the group $\GLp(n)$ of invertible matrices with positive determinant; here, $F=\grad\varphi$ represents the \emph{deformation gradient} corresponding to a deformation $\varphi\col\Omega\to\R^n$ of an elastic body $\Omega\subset\R^n$, which is assumed to be non-singular and orientation preserving. While this restriction of $W$ to a subset of $\Rnn$ poses additional challenges for the application of variational methods, the above concepts of generalized convexity can still be utilized \cite{ball1976convexity}, partly due to the fact that the set $\GLpn$ is itself rank-one convex and polyconvex.

An elastic energy potential $W\col\GLp(n)\to\R$ is also often assumed to be \emph{objective} (or \emph{frame-indifferent}) as well as \emph{isotropic}, i.e.\ it satisfies
\begin{equation}
	W(Q_1F\.Q_2) = W(F)
	\qquad\text{for all }\;F\in\GLp(n)
	\quad\text{and all }\;\; Q_1,Q_2\in\SO(n)\,,
\end{equation}
where $\SO(n)=\{X\in \Rnn \setvert Q^T Q=\id\,,\;\det Q=1\}$ denotes the special orthogonal group. In terms of sets of matrices, the theory of nonlinear elasticity thereby directly motivates the consideration of the particular class of \emph{isotropic subsets} of $\GLpn$.

\begin{definition}
\label{definition:isotropicSetGLp}
	A set $\Mbb\subset\GLpn$ is called \emph{isotropic} if
	\[
		Q_1F\.Q_2\in \Mbb
		\qquad\text{for all }\;F\in M
		\quad\text{and all }\;\; Q_1,Q_2\in\SO(n)\,.
	\]
\end{definition}

\begin{remark}
\label{remark:isotropyHemitropyOfSets}
	While Definition \ref{definition:isotropicSetGLp} is in agreement with the notion of isotropic subsets of $\GLpn$ employed by Cardaliaguet and Tahraoui \cite{cardaliaguet2002equivalence2}, different notions of isotropy for arbitrary subsets of $\Rnn$ can be found in the literature as well. In particular, there is no clear consensus on whether isotropy of $\Mbb\subset\Rnn$ encompasses only the invariance under multiplication with $Q\in\SOn$ or with any $Q\in\On$, where $\On=\{X\in\Rnn\setvert X^TX=\id\}$ denotes the orthogonal group.\footnote{%
		If the term \emph{isotropy} is used to denote $\On$-invariance, then $\SOn$-invariance is sometimes called \emph{hemitropy} instead.%
	}
	In the following, we will therefore use the terms \enquote{$\SOn$-invariant} and \enquote{$\On$-invariant} if there is any danger of ambiguity.
\end{remark}

Similar to the case of isotropic energy functions (cf.~Section \ref{sectionContains:energySVrepresentation}), any isotropic (in the sense of Definition \ref{definition:isotropicSetGLp}) set $\Mbb\subset\GLpn$ can be expressed in terms of \emph{singular values}. More specifically, there exists a unique set $M\subset\Oset$ such that $F\in\Mbb$ if and only if
\begin{align*}
	\Mbb &= \Mbb(M) \colonequals \{ F\in\GLpn \setvert \svmap(F)\in M \}\,,\\[-0.7em]
	\intertext{where}\\[-2em]
	\Oset &= \{ (\lambda_1,\dotsc,\lambda_n)\in\R^n \setvert \lambda_1\geq\ldots\geq\lambda_n>0 \}
\end{align*}
and $\svmap\col\GLpn\to\Oset$ denotes the mapping of any $F\in\GLp(2)$ to the vector $\svmap(F)$ of its singular values in descending order. Similarly, for any $M\subset\Oset$, the set $\Mbb(M)$ is isotropic.

In the planar case $n=2$, a number of results concerning the generalized convexity properties have been established for isotropic sets of matrices. Cardaliaguet and Tahraoui \cite{cardaliaguet2002equivalence2} characterized the relation between rank-one convexity and polyconvexity of compact isotropic sets.

\begin{proposition}[{Cardaliaguet and Tahraoui \cite{cardaliaguet2002equivalence2}}]
\label{proposition:cardaliaguetMainResult}
	Let $M\subset\Oset[2]$ be compact. Then the set $\mathbb{M}(M)$ is rank-one convex if and only if the two following properties are satisfied:
	\begin{itemize}
		\item[i)] if $C$ is a connected component of $M$, then $\mathbb{M}(C)$ is polyconvex;
		\item[ii)] if $C_1$ and $C_2$ are two distinct connected components of $M$, then $C_1$ and $C_2$ can be strictly separated in the following sense:\footnote{%
			Note that Cardaliaguet and Tahraoui state their result in terms of singular values in \emph{ascending} order.%
		}
		either $\displaystyle\sup_{(x,y)\in C_1} x < \inf_{(x,y)\in C_2} y$\;\; or \;$\displaystyle\sup_{(x,y)\in C_2} x < \inf_{(x,y)\in C_1} y$.\directqed
	\end{itemize}
\end{proposition}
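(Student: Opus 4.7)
The two conditions encode the obstruction to rank-one connections \emph{within} and \emph{between} the connected components of $M$ respectively, and I would prove each direction along those lines. The sufficiency part is essentially immediate, while necessity relies on the connected-case theorem of Cardaliaguet and Tahraoui cited in the abstract, together with an explicit construction of rank-one connections that is the technical heart of the argument.

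\emph{Sufficiency.} Assume (i) and (ii), and let $F_1,F_2\in\Mbb(M)$ with $\rank(F_2-F_1)=1$. If $\svmap(F_1)$ and $\svmap(F_2)$ lie in the same connected component $C$ of $M$, then $F_1,F_2\in\Mbb(C)$, which is rank-one convex (polyconvex implies rank-one convex by (i)), so the segment stays in $\Mbb(C)\subseteq\Mbb(M)$. Otherwise $\svmap(F_i)\in C_i$ for distinct components $C_1\ne C_2$, and (ii) gives, after swapping if necessary, that the largest singular value of $F_1$ is strictly less than the smallest singular value of $F_2$. Writing $F_2-F_1=a\otimes b$ with $b\ne 0$ and choosing a unit vector $v\perp b$ yields $F_1v=F_2v$, so the common value $|F_1v|=|F_2v|$ must simultaneously be at most the largest singular value of $F_1$ and at least the smallest singular value of $F_2$ --- a contradiction. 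So this case cannot occur and $\Mbb(M)$ is rank-one convex.

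\emph{Necessity of (i).} For a connected component $C$ of $M$, the set $\Mbb(C)$ is compact (preimage of the compact set $C$ under the proper map $\svmap$), isotropic by construction, and connected because it is the continuous image of the connected set $\SO(2)\times C\times\SO(2)$ under $(U,(\lambda_1,\lambda_2),V)\mapsto U\operatorname{diag}(\lambda_1,\lambda_2)V^T$. Rank-one convexity transfers from $\Mbb(M)$: given $F_1,F_2\in\Mbb(C)$ with $\rank(F_2-F_1)=1$, the segment lies in $\Mbb(M)$, its image under $\svmap$ is a continuous path in $M$ joining two points of $C$, and by the connected-component property this path stays in $C$. The single-component theorem of Cardaliaguet and Tahraoui now yields polyconvexity of $\Mbb(C)$.

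\emph{Necessity of (ii) --- the main obstacle.} Suppose, for contradiction, that distinct components $C_1,C_2$ fail strict separation, so that both $\sup_{C_1}x\geq\inf_{C_2}y$ and $\sup_{C_2}x\geq\inf_{C_1}y$. The decisive step is to produce $F\in\Mbb(C_1)$ and $G\in\Mbb(C_2)$ with $\rank(F-G)=1$: using the ordering $\mu_1\geq\mu_2$ built into $\Oset[2]$ together with the connectedness of $C_2$, an intermediate-value argument along a path in $C_2$ crossing the level $\mu_1=\lambda_1$ produces representatives $(\lambda_1,\lambda_2)\in C_1$ and $(\mu_1,\mu_2)\in C_2$ satisfying both $\lambda_1\geq\mu_2$ and $\mu_1\geq\lambda_2$ simultaneously. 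Then the $2\times 2$ identity $\det(F-G)=\det F+\det G-\operatorname{tr}(F\operatorname{adj} G)$, specialised to $F=\operatorname{diag}(\lambda_1,\lambda_2)$ and an unknown $G$ constrained to have the prescribed singular values, reduces the vanishing of $\det(F-G)$ to a quadratic system in the diagonal entries of $G$ that is solvable precisely under those two inequalities. With such $F,G$ in hand, rank-one convexity of $\Mbb(M)$ forces the segment $\{(1-t)F+tG:t\in[0,1]\}$ into $\Mbb(M)$, whose continuous $\svmap$-image is a path in $M$ from $C_1$ to $C_2$ --- contradicting that these are distinct components. This synchronised extraction of representatives and the accompanying rank-one construction is the step I expect to be genuinely delicate; everything else is bookkeeping.
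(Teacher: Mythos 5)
The paper does not prove this proposition: it is quoted from Cardaliaguet and Tahraoui with a direct citation and no internal argument, so there is nothing of the paper's own to compare you against. Judged on its own terms, your proposal is a correct \emph{reduction} of the compact case to the connected case, and the elementary parts are right. Sufficiency is complete: polyconvex sets are rank-one convex because $t\mapsto\det((1-t)F_1+tF_2)$ is affine along rank-one segments, and your $v\perp b$ computation correctly shows that strictly separated singular-value ranges admit no rank-one connection. The derivation of (ii) from rank-one convexity is also sound once two points are firmed up. First, to extract a \emph{single} pair $(\lambda_1,\lambda_2)\in C_1$, $(\mu_1,\mu_2)\in C_2$ with $\lambda_1\geq\mu_2$ and $\mu_1\geq\lambda_2$ simultaneously, you should not argue ``along a path in $C_2$'' (a compact connected set need not be path-connected); instead observe that $\{(p,q)\setvert\lambda_1<\mu_2\}$ and $\{(p,q)\setvert\mu_1<\lambda_2\}$ are disjoint open subsets of the connected set $C_1\times C_2$, so if they covered it one of them would be all of it, and compactness would then upgrade the resulting non-strict inequality to the strict separation you assumed fails. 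Second, the rank-one connection itself is easier than your quadratic-system route suggests: pick $s\in[\lambda_2,\lambda_1]\cap[\mu_2,\mu_1]$, choose unit vectors $u,w$ with $\norm{\diag(\lambda_1,\lambda_2)u}=\norm{\diag(\mu_1,\mu_2)w}=s$, and compose with two rotations in $\SO(2)$ so that $Gu=Fu$; since $C_1\cap C_2=\emptyset$ forces $F\neq G$, this gives $\rank(F-G)=1$ and is exactly dual to your sufficiency computation.

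The one genuine issue is the necessity of (i). You invoke the connected-case equivalence (Proposition \ref{proposition:rankOnePolyEquivalenceCompactConnectedSetsGLp}) to conclude that the compact, connected, isotropic, rank-one convex set $\Mbb(C)$ is polyconvex. But in the paper's own logical ordering that proposition is \emph{deduced from} the one you are proving, and in Cardaliaguet--Tahraoui's work the connected case is where essentially all of the difficulty lives, namely the explicit construction of a convex function $P$ with $\Mbb(C)=\{F\setvert P(F,\det F)\leq0\}$. So what you have written is a correct and clean proof that the general compact case follows from the connected case, not a self-contained proof of the theorem; the genuinely delicate step is not the rank-one connection construction you flag, but the single-component polyconvexity that you take as a black box.
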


In particular, Proposition \ref{proposition:cardaliaguetMainResult} yields the equivalence of rank-one convexity and polyconvexity of $\Mbb\subset\GLp(2)$ if the set $\Mbb$, and thus its singular value representation $M\subset\Oset[2]$ with $\Mbb=\Mbb(M)$, is compact and connected.

\begin{proposition}[\cite{cardaliaguet2002equivalence2}]
\label{proposition:rankOnePolyEquivalenceCompactConnectedSetsGLp}
	Let $\Mbb\subset\GLp(2)$ be isotropic, compact and connected. Then $\Mbb$
	is rank-one convex if and only if $\Mbb$ polyconvex.
	\directqed
\end{proposition}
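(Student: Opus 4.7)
The plan is to deduce this proposition as a direct consequence of Proposition~\ref{proposition:cardaliaguetMainResult}, exploiting the fact that the connectedness hypothesis on $\Mbb$ forces the singular value description of $\Mbb$ to consist of a single connected component, which collapses the two-part criterion of Cardaliaguet and Tahraoui to the single polyconvexity clause.

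More concretely, I would proceed as follows. First, since $\Mbb\subset\GLp(2)$ is isotropic, the discussion preceding Proposition~\ref{proposition:cardaliaguetMainResult} guarantees the existence of a unique $M\subset\Oset[2]$ with $\Mbb=\Mbb(M)$, namely $M=\svmap(\Mbb)$. Because the singular value mapping $\svmap\col\GLp(2)\to\Oset[2]$ is continuous, compactness and connectedness of $\Mbb$ pass to $M$: $M$ is compact, and $M$ is connected as the continuous image of a connected set. In particular, $M$ has exactly one connected component, namely $M$ itself.

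Given this, I would invoke Proposition~\ref{proposition:cardaliaguetMainResult}: since there is only a single connected component $C=M$, condition~(ii) of that proposition (strict separation of \emph{distinct} components) is vacuously satisfied, while condition~(i) reduces to the single statement that $\Mbb(M)=\Mbb$ is polyconvex. Thus the theorem asserts that $\Mbb$ is rank-one convex if and only if $\Mbb$ is polyconvex, which is exactly the desired equivalence in the nontrivial direction. The reverse direction (polyconvex $\Rightarrow$ rank-one convex) is a standard, purely algebraic argument: for $F_1,F_2\in\Mbb$ with $\rank(F_2-F_1)=1$, the rank-one affineness of the determinant gives $\det((1-t)F_1+tF_2)=(1-t)\det F_1+t\det F_2$, so if $\Mbb=\{F\setvert P(F,\det F)\leq0\}$ with $P$ convex, then $P$ evaluated at the convex combination is bounded by the convex combination of its values at the two endpoints, which is nonpositive.

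There is essentially no obstacle here beyond bookkeeping; the entire substance is absorbed into Proposition~\ref{proposition:cardaliaguetMainResult}. The only point worth double-checking is that the singular value map is indeed continuous on $\GLp(2)$ (which follows from the continuous dependence of singular values on matrix entries) so that connectedness of $\Mbb$ does transfer to $M$; this is a standard fact and requires no further justification.
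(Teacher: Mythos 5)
Your proposal is correct and follows essentially the same route as the paper, which presents this proposition as an immediate consequence of Proposition~\ref{proposition:cardaliaguetMainResult}: connectedness of $\Mbb$ passes to its singular-value representation $M$, so $M$ has a single connected component, condition~(ii) becomes vacuous, and condition~(i) collapses to the polyconvexity of $\Mbb$ itself. The extra verification of the direction polyconvex $\Rightarrow$ rank-one convex via the rank-one affinity of the determinant is harmless but redundant, since the ``if and only if'' in Proposition~\ref{proposition:cardaliaguetMainResult} already delivers both implications.
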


By a counterexample \cite[p.~1225]{cardaliaguet2002equivalence2}, Cardaliaguet and Tahraoui also showed that the connectedness required in Proposition \ref{proposition:rankOnePolyEquivalenceCompactConnectedSetsGLp} cannot be simply omitted. In a companion paper \cite{cardaliaguet2002equivalence1}, they also considered the \emph{unconstrained} case of $\OO(2)$-invariant sets of matrices, i.e.\ subsets $\Mbb\subset\R^{2\times2}$ of the form
\begin{equation}\label{eq:unconstrainedSetSingularValueRepresentation}
	\Mbb = \{ X\in\R^{2\times2} \setvert (\lambdahat_1(X),\lambdahat_2(X))\in M \}
	\qquad\text{with }\;
	M\subset \{ (x,y)\in \R^2 \setvert x\geq y\geq 0 \}\,,
\end{equation}
where $\lambdahat_1(X)\geq\lambdahat_2(X)\geq0$ are the singular values of $X\in\R^{2\times2}$ in descending order, obtaining a result analogous to Proposition \ref{proposition:rankOnePolyEquivalenceCompactConnectedSetsGLp}.

\begin{proposition}[\cite{cardaliaguet2002equivalence1}]
\label{proposition:cardliaguetOinvariantSetsUnconstrained}
	Let $\Mbb\subset\R^{2\times 2}$ be $\OO(2)-invariant$, compact, connected and rank-one convex. Then $\Mbb$ is polyconvex.
	\directqed
\end{proposition}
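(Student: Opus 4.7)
The plan is to closely parallel the proof strategy for Proposition \ref{proposition:rankOnePolyEquivalenceCompactConnectedSetsGLp}, adapted to the unconstrained setting. First, I would pass to the singular value picture: by compactness of $\Mbb$ and continuity of $\lambdahat$, the singular value set $M = \lambdahat(\Mbb) \subset \{(x,y)\in\R^2 : x \geq y \geq 0\}$ is compact; using the connectedness of $\Mbb$ together with the fact that $\lambdahat$ is continuous and that every fiber $\lambdahat^{-1}(x,y)$ is path-connected (even for $y = 0$), the set $M$ is also connected. The $\OO(2)$-invariance of $\Mbb$ is already built into the singular value representation \eqref{eq:unconstrainedSetSingularValueRepresentation}.

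Next, I would establish the unconstrained analogue of Proposition \ref{proposition:cardaliaguetMainResult}. Rank-one convexity of $\Mbb$ translates to a geometric condition on $M$ in the $(x,y)$-plane: between singular-value pairs $(x_1,y_1),(x_2,y_2)\in M$ that can be realized by rank-one connected matrices, every rank-one segment in $\R^{2\times2}$ traces out an explicit curve in singular-value space which must lie inside $M$. Compactness and connectedness of $M$ rule out the strictly separated alternative (cf.~item ii) of Proposition \ref{proposition:cardaliaguetMainResult}) and force $M$ to be a \emph{biconvex} region, i.e.~convex under those restricted motions in the $(x,y)$-plane which correspond to rank-one directions in $\R^{2\times 2}$. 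This structural step is the heart of the argument.

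Finally, from the structural description of $M$ I would explicitly construct a convex function $P\col \R^{2\times 2}\times\R \to \R_\infty$ such that $\Mbb = \{F : P(F,\det F)\leq 0\}$. The natural building blocks are $\opnorm{F}=\lambdahat_1(F)$ and the Schatten $1$-norm $\lambdahat_1(F)+\lambdahat_2(F)$, both of which are convex in $F$, together with the linear map $F\mapsto\det F$; the boundary description of $M$ produced in the previous step becomes a system of convex inequalities in these quantities. To accommodate $\OO(2)$-invariance, which includes reflections flipping the sign of $\det F$, one can either work with symmetric inequalities in $\pm\det F$ or take a maximum of two convex descriptions (one for each sign of the determinant), which remains convex and hence polyconvex.

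The main obstacle is the second step: precisely handling the locus $\{y=0\}\subset M$, which corresponds to the critical set $\{\det F = 0\}\subset\R^{2\times2}$ where the singular value map fails to be a submersion and where arbitrarily small perturbations can swap the roles of $\lambdahat_1$ and $\lambdahat_2$ or change the sign of $\det F$. Rank-one segments crossing this locus therefore require a delicate case analysis; this is the technical reason why Proposition \ref{proposition:cardliaguetOinvariantSetsUnconstrained} is not an immediate corollary of Proposition \ref{proposition:rankOnePolyEquivalenceCompactConnectedSetsGLp} and why Cardaliaguet and Tahraoui treat it in a separate companion paper.
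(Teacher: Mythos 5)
The paper does not prove this proposition at all: it is quoted verbatim from the companion paper of Cardaliaguet and Tahraoui \cite{cardaliaguet2002equivalence1} and marked with a q.e.d.\ symbol to indicate a cited result. So the only meaningful comparison is against what a complete proof would require, and measured that way your proposal is an outline rather than a proof. The first step is fine in substance, but note that to conclude that $M=\lambdahat(\Mbb)$ is connected you only need that $\lambdahat$ is continuous and $\Mbb$ is connected; the auxiliary claim that every fiber $\lambdahat^{-1}(x,y)$ is path-connected is both unnecessary for this direction and false for $y>0$ (the fiber then splits into the two components $\det F=+xy$ and $\det F=-xy$; it is precisely the degenerate locus $y=0$ where the fiber is connected, the opposite of what your parenthetical suggests).

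The genuine gap is that your second and third steps, which you yourself identify as ``the heart of the argument,'' are asserted rather than carried out, and together they \emph{are} the theorem. You state that rank-one convexity of $\Mbb$ ``forces $M$ to be a biconvex region'' without defining that notion, without describing the curves in the $(x,y)$-plane traced by rank-one segments, and without explaining how compactness and connectedness exclude the separated alternative of Proposition~\ref{proposition:cardaliaguetMainResult} in the unconstrained setting where segments may cross $\{\det F=0\}$. Likewise, the claim that the resulting boundary description of $M$ ``becomes a system of convex inequalities'' in $\opnorm{F}$, $\lambdahat_1(F)+\lambdahat_2(F)$ and $\det F$ is not justified: to establish polyconvexity one must, for every $F_0\notin\Mbb$, exhibit a separating function of the form $F\mapsto\langle A,F\rangle+c\det F+d$ (or a convex $P(F,\det F)$ built from such polyaffine supports), and showing that the geometry of $M$ supplies these --- including the correct handling of both signs of the determinant --- is the substantial case analysis that occupies Cardaliaguet and Tahraoui's paper. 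As it stands, the proposal correctly names the architecture of their argument but proves none of its load-bearing components.
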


An interesting generalization of Proposition \ref{proposition:cardliaguetOinvariantSetsUnconstrained} has been given by Conti et al.\ \cite{conti2003polyconvexity}.

\begin{proposition}[{Conti et al.\ \cite[Theorem 2.1]{conti2003polyconvexity}}]
\label{proposition:contiSOinvariantSets}
	Let $\Mbb\subset\R^{2\times 2}$ be $\SO(2)$-invariant, compact, connected and rank-one convex. Then $\Mbb$ is polyconvex.
	\directqed
\end{proposition}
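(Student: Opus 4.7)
The plan is to reduce the statement to Proposition~\ref{proposition:rankOnePolyEquivalenceCompactConnectedSetsGLp} (Cardaliaguet--Tahraoui) via the \emph{signed} singular value map. Define
\[
\phi\col\R^{2\times 2}\to K\colonequals\{(x,y)\in\R^2 \setvert x\geq\abs{y}\geq 0\}\,,\qquad \phi(F)=(\sigma_1(F),\;\mathrm{sign}(\det F)\.\sigma_2(F))\,,
\]
where $\sigma_1(F)\geq\sigma_2(F)\geq 0$ are the ordered singular values; the map $\phi$ parameterizes the $\SO(2)\times\SO(2)$-orbits of $\R^{2\times 2}$ and satisfies $\det F=xy$. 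Since $\Mbb$ is $\SO(2)$-invariant, $\Mbb=\phi^{-1}(M)$ for a unique compact connected $M\subset K$, and I proceed by a case distinction according to whether $0\in\det(\Mbb)$.

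If $0\notin\det(\Mbb)$, then connectedness of $\Mbb$ together with continuity of $\det$ forces $\det$ to have constant sign on $\Mbb$. For $\Mbb\subset\GLp(2)$ the conclusion is Proposition~\ref{proposition:rankOnePolyEquivalenceCompactConnectedSetsGLp}. For $\det<0$ on $\Mbb$, right-multiplication by a fixed reflection $R\in\OO(2)\setminus\SO(2)$ moves $\Mbb$ into $\GLp(2)$ while preserving $\SO(2)$-invariance (as $\SO(2)$ is normal in $\OO(2)$), rank-one convexity (as $R$ is invertible), and polyconvexity; Proposition~\ref{proposition:rankOnePolyEquivalenceCompactConnectedSetsGLp} then applies to $\Mbb\.R$ and the convex representation pulls back.

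The substantive case is $0\in\det(\Mbb)$. Decompose $\Mbb=\Mbb^+\cup\Mbb^-$ with $\Mbb^\pm=\Mbb\cap\{F\setvert\pm\det F\geq 0\}$, both compact and $\SO(2)$-invariant and meeting along $\Mbb_0\colonequals\Mbb\cap\{\det F=0\}$. The key structural fact is that $t\mapsto\det(F_1+t(F_2-F_1))$ is \emph{affine} along any rank-one segment, so rank-one convexity of $\Mbb$ forces every rank-one segment joining $\Mbb^+$ to $\Mbb^-$ to pass through $\Mbb_0$ at its determinant zero. After verifying that the connected components of $\Mbb^\pm$ each satisfy the hypotheses of Proposition~\ref{proposition:rankOnePolyEquivalenceCompactConnectedSetsGLp} (or its reflected analogue), one obtains convex potentials $P^\pm$ that agree on $\Mbb_0$.

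The main obstacle is assembling $P^\pm$ into a single convex $P\col\R^{2\times 2}\times\R\to\R_\infty$ with $\Mbb=\{F\setvert P(F,\det F)\leq 0\}$, since $\min(P^+,P^-)$ need not be convex. I would instead work with the equivalent characterization that $\Mbb$ is polyconvex if and only if $\Mbb=\{F\setvert(F,\det F)\in\widehat{\Mbb}\}$, where $\widehat{\Mbb}\subset\R^5$ is the closed convex hull of $\{(G,\det G)\setvert G\in\Mbb\}$. The remaining task is to exclude any $F\notin\Mbb$ admitting a representation $(F,\det F)=\sum_i\alpha_i(F_i,\det F_i)$ with $F_i\in\Mbb$, $\alpha_i\geq 0$, $\sum_i\alpha_i=1$; Carath\'eodory permits restriction to six terms in $\R^5$, and the exclusion should follow from the planar geometry of $M\subset K$, the affinity of $\det$ along rank-one directions, and the hypothesis that $\Mbb$ already equals its own rank-one convex hull.
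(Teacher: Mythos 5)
This proposition is quoted in the paper from Conti et al.\ without proof, so the relevant comparison is with the argument your plan would actually need to supply. Your reductions are fine as far as they go: the signed singular value parametrization is the right normal form for $\SO(2)$-invariant sets (the paper itself remarks that the unsigned representation \eqref{eq:unconstrainedSetSingularValueRepresentation} is only available under $\OO(2)$-invariance), and the case $0\notin\det(\Mbb)$ does reduce to Proposition~\ref{proposition:rankOnePolyEquivalenceCompactConnectedSetsGLp} via constancy of the sign of $\det$ and right-multiplication by a reflection, all of which preserves the relevant structure as you say.

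The gap is that the case $0\in\det(\Mbb)$ --- which is the entire content of the theorem --- is not proved. Three concrete problems. First, $\Mbb^{+}=\Mbb\cap\{\det F\geq 0\}$ contains singular matrices, so it is \emph{not} a subset of $\GLp(2)$ and Proposition~\ref{proposition:rankOnePolyEquivalenceCompactConnectedSetsGLp} does not apply to it or to its connected components; intersecting with $\GLp(2)$ instead destroys compactness, since components can accumulate on $\{\det F=0\}$. Second, the assertion that the potentials $P^{\pm}$ ``agree on $\Mbb_0$'' has no justification: polyconvex representations are highly non-unique and there is no mechanism forcing two independently produced convex functions to coincide anywhere, which is why you correctly abandon this route --- but the fallback inherits the same difficulty. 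Third, the polyconvex-hull reformulation is not a proof step but a restatement of the theorem: showing that no $F\notin\Mbb$ satisfies $(F,\det F)=\sum_i\alpha_i(F_i,\det F_i)$ with $F_i\in\Mbb$ is exactly the claim that $\Mbb$ equals its polyconvex hull, and ``the exclusion should follow from the planar geometry'' is where the argument has to live. In the actual proof of Conti, De~Lellis, M\"uller and Romeo this is done by explicitly constructing, for each $F\notin\Mbb$, a separating polyaffine function $G\mapsto\alpha+\langle\beta,G\rangle+\gamma\det G$ that is positive at $F$ and nonpositive on $\Mbb$, using a careful analysis of the lamination-convex hull of $\SO(2)$-orbits near the singular set; none of that construction is present here, so the proposal is a reduction plus a statement of what remains to be shown rather than a proof.
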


Again, note carefully that $\Mbb\subset\R^{2\times2}$ is of the form \eqref{eq:unconstrainedSetSingularValueRepresentation} if and only if it is $\OO(2)$-invariant, whereas the weaker requirement of $\SO(2)$-invariance in Proposition \ref{proposition:contiSOinvariantSets} does not allow for this particular single value representation in general.

More recently, it was also shown by Heinz that rank-one convexity of an $\OO(2)$-invariant compact set $\Mbb\subset\R^{2\times2}$ implies the \emph{quasiconvexity} \cite{heinz2015quasiconvexity} of $\Mbb$; note that Heinz' result does not require $\Mbb$ to be connected.

\bigskip

In terms of energy functions on $\GLp(2)$, Proposition \ref{proposition:rankOnePolyEquivalenceCompactConnectedSetsGLp} yields an immediate consequence for the generalized notion of \emph{q-convexity}.\footnote{%
	In the literature, functions satisfying Definition \ref{definition:q-Convexity} are more commonly known as \emph{quasiconvex} functions; however, due to the obvious ambiguity accompanied by the term, we will exclusively refer to this property as q-convexity here and throughout.%
}

\begin{definition}\label{definition:q-Convexity}
	Let $W\col M\to\R_\infty$ be an extended real-valued function on a convex subset $M$ of a linear space. Then $W$ is called \emph{q-convex} if for every $c\in\R$, the sublevel set $S_c \colonequals \{x\in M \setvert W(x) \leq c\}$ is convex.
\end{definition}
While every convex function is q-convex, the reverse implication does not hold in general; for example, the mapping $t\mapsto \ln^2(t)$ is q-convex on $\Rp=(0,\infty)$ but not convex.

The concept of q-convexity can be generalized to weakened convexity conditions, including rank-one convexity and polyconvexity.

\begin{definition}\label{definition:q-ConvexityGeneralized}
	A function $W\col \Rnn\to\R_\infty$ is called \emph{q-rank-one-convex} [\emph{q-polyconvex}] if all sublevel sets of $W$ are rank-one convex [polyconvex].
\end{definition}

Since the sublevel sets of any isotropic function $W\col\GLp(2)\to\R$ are isotropic, the following corollary follows immediately from Proposition \ref{proposition:rankOnePolyEquivalenceCompactConnectedSetsGLp}; note the obvious analogy to Conjecture \ref{conjecture:mielke}.

\begin{corollary}
\label{corollary:qConvexityEquivalence}
	Let $W\col\GLp(2)\to\R_\infty$ be an isotropic function such that all sublevel sets of $W$ are compact and connected. Then $W$ is q-rank-one-convex if and only if $W$ is q-polyconvex.
	\directqed
\end{corollary}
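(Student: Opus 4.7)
The plan is to apply Proposition~\ref{proposition:rankOnePolyEquivalenceCompactConnectedSetsGLp} to each sublevel set separately, since both q-rank-one-convexity and q-polyconvexity are defined through the corresponding set-theoretic property holding on every sublevel set (Definition~\ref{definition:q-ConvexityGeneralized}).

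First I would verify that, for every $c\in\R$, the sublevel set $S_c=\{F\in\GLp(2)\setvert W(F)\leq c\}$ is an isotropic subset of $\GLp(2)$ in the sense of Definition~\ref{definition:isotropicSetGLp}. This is an immediate consequence of the isotropy of $W$: for any $F\in S_c$ and any $Q_1,Q_2\in\SO(2)$ one has $W(Q_1 F Q_2)=W(F)\leq c$, so $Q_1 F Q_2\in S_c$. By hypothesis, $S_c$ is moreover compact and connected, so the three assumptions of Proposition~\ref{proposition:rankOnePolyEquivalenceCompactConnectedSetsGLp} are all met by $S_c$.

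Next I would invoke Proposition~\ref{proposition:rankOnePolyEquivalenceCompactConnectedSetsGLp} to conclude that $S_c$ is rank-one convex if and only if $S_c$ is polyconvex, uniformly in $c\in\R$. Combining this equivalence with Definition~\ref{definition:q-ConvexityGeneralized} then yields
\[
    W \text{ is q-rank-one-convex} \iff S_c \text{ is rank-one convex } \forall c\in\R \iff S_c \text{ is polyconvex } \forall c\in\R \iff W \text{ is q-polyconvex},
\]
which is exactly the claim of the corollary.

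There is essentially no obstacle here; the statement is a pointwise (level-set-wise) reformulation of the already proved set-level equivalence. The only mild formal subtlety worth a sentence in the write-up is that the polyconvexity in Definition~\ref{def:generalizedConvexitySets} is stated for subsets of $\R^{2\times 2}$ rather than of $\GLp(2)$, but since $S_c\subset\GLp(2)\subset\R^{2\times 2}$ this causes no issue and Proposition~\ref{proposition:rankOnePolyEquivalenceCompactConnectedSetsGLp} is stated in the form we need.
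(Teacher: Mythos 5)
Your argument is exactly the paper's: the paper notes that sublevel sets of an isotropic function are isotropic and then declares the corollary immediate from Proposition~\ref{proposition:rankOnePolyEquivalenceCompactConnectedSetsGLp} applied levelwise, which is precisely your reasoning spelled out. The proposal is correct and matches the intended proof.
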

%
%
%
%
\subsection{Polyconvexity criteria in terms of singular values}

In the aforementioned 2005 article \cite{mielke2005necessary}, Mielke obtained a number of results for generalized convexity properties of isotropic energy functions on $\GLpn$ in terms of their singular value representation, including a necessary and sufficient criterion for polyconvexity in the planar case suited for both numerical and analytical applications (cf.\ Section \ref{section:assemblingTheProof}).

Mielke considered the representation of an isotropic energy $W\col\GLp(2)\to\R$ in terms of \emph{ordered} singular values, i.e.\ the uniquely determined function
\label{sectionContains:energySVrepresentation}
\begin{equation}
	\ghat\col\Oset[2]\to\R
	\qquad\text{on the set }\;\;
	\Oset[2]
	= \{ (\lambda_1,\lambda_2)\in\R^2 \setvert \lambda_1\geq\lambda_2>0 \}
	\subset \Rp^2
\end{equation}
such that
\begin{equation}\label{eq:mielkeOrderedSingularValueRepresentation}
	W(F) = \ghat(\svmap(F))
	\qquad\tforall F\in\GLp(2)\,,
\end{equation}
where $\Rp=(0,\infty)$ and $\svmap\col\GLp(2)\to\Oset[2]$ is the mapping of any $F\in\GLp(2)$ to the vector $\svmap(F)$ of its singular values in descending order. After establishing general necessary and sufficient criteria for the polyconvexity\footnote{%
	It is important to note that the definition of polyconvexity of $W\col\GLpn\to\R$ employed by Mielke requires $W$ to satisfy the growth condition $W(F)\to+\infty$ for $\det F\to0$, which corresponds to the lower semicontinuity of the extension of $W$ to $\Rnn$ obtained by setting $W(F)=+\infty$ for all $F\in\Rnn\setminus\GLpn$, cf.\ Remark \ref{remark:compactnessGrowthCondition}.%
}
of functions, Mielke \cite[Theorem 4.1]{mielke2005necessary} also obtained a previous result by Miroslav \v{S}ilhav\'{y} for the differentiable planar case as a direct corollary.
\begin{theorem}[{\v{S}ilhav\'{y} \cite[Proposition 4.1]{vsilhavy2002convexity}}]\label{polycrit}
	Let $\ghat\in C^1(\Oset[2];\R)$. Then the function $W\col\GLp(2)\to\R$ represented by $\ghat$ in terms of ordered singular values as defined by \eqref{eq:mielkeOrderedSingularValueRepresentation} is polyconvex if and only if
	\begin{samepage}
		\begin{equation}
			\forall\;\gamma\in \Oset[2]
			\quad\exists\;c\in \left[
				-\frac{\frac{\partial \ghat}{\partial{\lambda_1}}(\gamma_1,\gamma_2)-\frac{\partial \ghat}{\partial{\lambda_2}}(\gamma_1,\gamma_2)}{\gamma_1-\gamma_2}
				\,,\;
				\frac{\frac{\partial \ghat}{\partial\lambda_1}(\gamma_1,\gamma_2)+\frac{\partial \ghat}{\partial\lambda_2}\ghat(\gamma_1,\gamma_2)}{\gamma_1+\gamma_2}
			\right]
			\quad\forall\;\nu\in \Oset[2]:
		\end{equation}
		\[
			\quad\ghat(\nu_1,\nu_2)
			\geq \ghat(\gamma_1,\gamma_2) + \frac{\partial\ghat}{\partial{\lambda_1}}(\gamma_1,\gamma_2)\cdot(\nu_1-\gamma_1) + \frac{\partial\ghat}{\partial{\lambda_2}}(\gamma_1,\gamma_2)\cdot(\nu_2-\gamma_2) + c\,(\nu_1-\gamma_1)\,(\nu_2-\gamma_2)
			\,.
			\directqed
		\]
	\end{samepage}
\end{theorem}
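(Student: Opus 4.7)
The plan is to derive \v{S}ilhav\'{y}'s criterion from the general characterisation of polyconvexity via supporting affine minorants in the enlarged space of $(F,\det F)$. Specifically, $W$ is polyconvex on $\GLp(2)$ (extended by $+\infty$ to $\R^{2\times 2}$) if and only if for every $F_0\in\GLp(2)$ there exist $A\in\R^{2\times 2}$ and $b\in\R$ with
\[
W(F)\;\geq\;W(F_0)+\langle A,F-F_0\rangle+b\,(\det F-\det F_0)\qquad\text{for all }F\in\R^{2\times 2}.
\]
By $\SO(2)\times\SO(2)$-invariance of $W$, it suffices to construct such $(A,b)$ at diagonal base points $F_0=\operatorname{diag}(\gamma_1,\gamma_2)$ with $(\gamma_1,\gamma_2)\in\Oset[2]$, since any $F_0=Q_1\operatorname{diag}(\gamma)Q_2$ yields a valid support via $(Q_1^T A\,Q_2^T,b)$ and conversely.

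At $F_0=\operatorname{diag}(\gamma)$ with $\gamma_1>\gamma_2>0$, the $C^1$ hypothesis permits gradient matching. A short computation from the singular value representation gives $\partial_F W(F_0)=\operatorname{diag}\bigl(\tfrac{\partial\ghat}{\partial\lambda_1}(\gamma),\tfrac{\partial\ghat}{\partial\lambda_2}(\gamma)\bigr)$ — the off-diagonal entries vanish because first-order perturbations of $F_0$ in the $F_{12}$- or $F_{21}$-direction do not change the singular values — while $\partial_F\det(F_0)=\operatorname{diag}(\gamma_2,\gamma_1)$. The supporting slope condition $\partial_F W(F_0)=A+b\,\partial_F\det(F_0)$ therefore forces $A=\operatorname{diag}(a_1,a_2)$ with
\[
a_1=\tfrac{\partial\ghat}{\partial\lambda_1}(\gamma)-b\,\gamma_2,\qquad a_2=\tfrac{\partial\ghat}{\partial\lambda_2}(\gamma)-b\,\gamma_1,
\]
reducing the search for a supporting hyperplane to a single scalar $b$.

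The core technical step is a worst-case analysis of $\langle A,F\rangle$ over all $F\in\GLp(2)$ sharing given singular values $(\nu_1,\nu_2)$. Parametrising $F=R_\alpha\operatorname{diag}(\nu_1,\nu_2)R_\beta^T$ with $R_\alpha,R_\beta\in\SO(2)$, a trigonometric computation yields
\[
\langle A,F\rangle\;=\;\tfrac{1}{2}(a_1+a_2)(\nu_1+\nu_2)\cos(\alpha-\beta)+\tfrac{1}{2}(a_1-a_2)(\nu_1-\nu_2)\cos(\alpha+\beta),
\]
so that, since $\alpha-\beta$ and $\alpha+\beta$ range independently, $\max_F\langle A,F\rangle=\tfrac{1}{2}\bigl[\abs{a_1+a_2}(\nu_1+\nu_2)+\abs{a_1-a_2}(\nu_1-\nu_2)\bigr]$. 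Because $\det F=\nu_1\nu_2$ is constant along this family, the support inequality holds for all such $F$ iff it holds at the maximiser. Applied at $(\nu_1,\nu_2)=(\gamma_1,\gamma_2)$, the requirement $\max_F\langle A,F\rangle\leq\langle A,F_0\rangle$ forces the sign conditions $a_1+a_2\geq 0$ and $a_1-a_2\geq 0$; re-expressed in $b$ and the partial derivatives of $\ghat$, these are exactly the upper and lower bounds of the stated interval for $c=b$.

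Conversely, once $b=c$ lies in that interval, the sign conditions make $F=\operatorname{diag}(\nu)$ a global maximiser of $\langle A,F\rangle$ with value $a_1\nu_1+a_2\nu_2$, so the full support inequality reduces to $\ghat(\nu)\geq\ghat(\gamma)+a_1(\nu_1-\gamma_1)+a_2(\nu_2-\gamma_2)+b(\nu_1\nu_2-\gamma_1\gamma_2)$; substituting the formulas for $a_1,a_2$ and using the identity $\nu_1\nu_2-\gamma_2\nu_1-\gamma_1\nu_2+\gamma_1\gamma_2=(\nu_1-\gamma_1)(\nu_2-\gamma_2)$ reproduces precisely \v{S}ilhav\'{y}'s inequality with $c=b$. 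Isotropy then propagates the supporting hyperplane to every $F_0\in\GLp(2)$, yielding polyconvexity. The main technical obstacles are the SVD maximisation itself — both the trigonometric reduction and the careful accounting of signs — together with the degenerate case $\gamma_1=\gamma_2$, where the lower bound $-\frac{\partial_{\lambda_1}\ghat-\partial_{\lambda_2}\ghat}{\gamma_1-\gamma_2}$ develops a $0/0$ indeterminacy and must be interpreted as a limit of the non-degenerate case using the $C^1$ regularity of $\ghat$.
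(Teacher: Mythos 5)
The paper does not actually prove Theorem~\ref{polycrit}: it is quoted from \v{S}ilhav\'{y} (via Mielke's Theorem~4.1) with the proof omitted, so there is no in-paper argument to compare against. Your derivation follows the standard route to this criterion and is correct in outline: reduce polyconvexity to the existence at each $F_0$ of a supporting polyaffine function $F\mapsto W(F_0)+\langle A,F-F_0\rangle+b\,(\det F-\det F_0)$, use isotropy to restrict to $F_0=\diag(\gamma_1,\gamma_2)$, match gradients to pin down $A=\diag\bigl(\partial_{\lambda_1}\ghat-b\gamma_2,\,\partial_{\lambda_2}\ghat-b\gamma_1\bigr)$, and maximise $\langle A,\cdot\rangle$ over each $\SO(2)\times\SO(2)$-orbit. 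Your trigonometric identity is correct, the sign conditions $a_1\pm a_2\geq0$ translate into exactly the stated interval for $c=b$, and the identity $(\nu_1-\gamma_1)(\nu_2-\gamma_2)=\nu_1\nu_2-\gamma_2\nu_1-\gamma_1\nu_2+\gamma_1\gamma_2$ recovers \v{S}ilhav\'{y}'s inequality.

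Two steps are asserted rather than proved, and they are precisely where the cited references do their work. First, the \emph{only if} direction needs the convex representative $P$ with $W(F)=P(F,\det F)$ to possess a subgradient at $(F_0,\det F_0)$; this holds because $(F_0,\det F_0)$ lies in the interior of the convex hull of the graph of $\det$ over $\GLp(2)$ (the quadratic form $\det$ is indefinite, so its graph bends to both sides of any tangent plane), but that observation is needed and is absent --- without it the gradient-matching step cannot be invoked. Second, the case $\gamma_1=\gamma_2$ is not merely a removable $0/0$ in the interval's lower endpoint: at such points $W$ is differentiable at $F_0$ only if $\partial_{\lambda_1}\ghat(\gamma,\gamma)=\partial_{\lambda_2}\ghat(\gamma,\gamma)$, so the gradient-matching argument itself degenerates, and the criterion there must be obtained by a separate limiting argument from $\gamma_1>\gamma_2$ using the $C^1$ regularity of $\ghat$ and the stability of the support inequality under limits. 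Neither issue breaks the approach, but a complete proof has to address both.
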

%
%
%
%
%
\subsection{Previous examples of rank-one convex, non-polyconvex energy functions}
\label{section:previousExamples}

A number of examples for rank-one convex, non-polyconvex functions have already been given in the literature \cite{aubert1987counterexample,alibert1992example,dacorogna1988counterexample}. The first such counterexample, a fourth-degree homogeneous polynomial expression, is due to Aubert \cite{aubert1986contribution,aubert1987counterexample}, who considered the function $\WA$ given by
\begin{align}\label{eq:aubertEnergy}
	\WA\col\GLp(2)\to\R\,,\quad
	\WA(F)
	&=\frac13\.\norm{F}^4-\frac16\.(\det F)^2-\frac23\.\det F\cdot\norm{F}^2
	\\&= \frac13(\lambda_1^4+\lambda_2^4) + \frac12\lambda_1^2\lambda_2^2 - \frac23(\lambda_1^3\lambda_2+\lambda_1\lambda_2^3)\notag
\end{align}
for all $F\in\GLp(2)$ with (not necessarily ordered) singular values $\lambda_1,\lambda_2$. 

\begin{figure}[h!]\begin{center}
		\begin{minipage}[t]{0.45\linewidth}
			\centering
			\includegraphics[height=.8\textwidth]{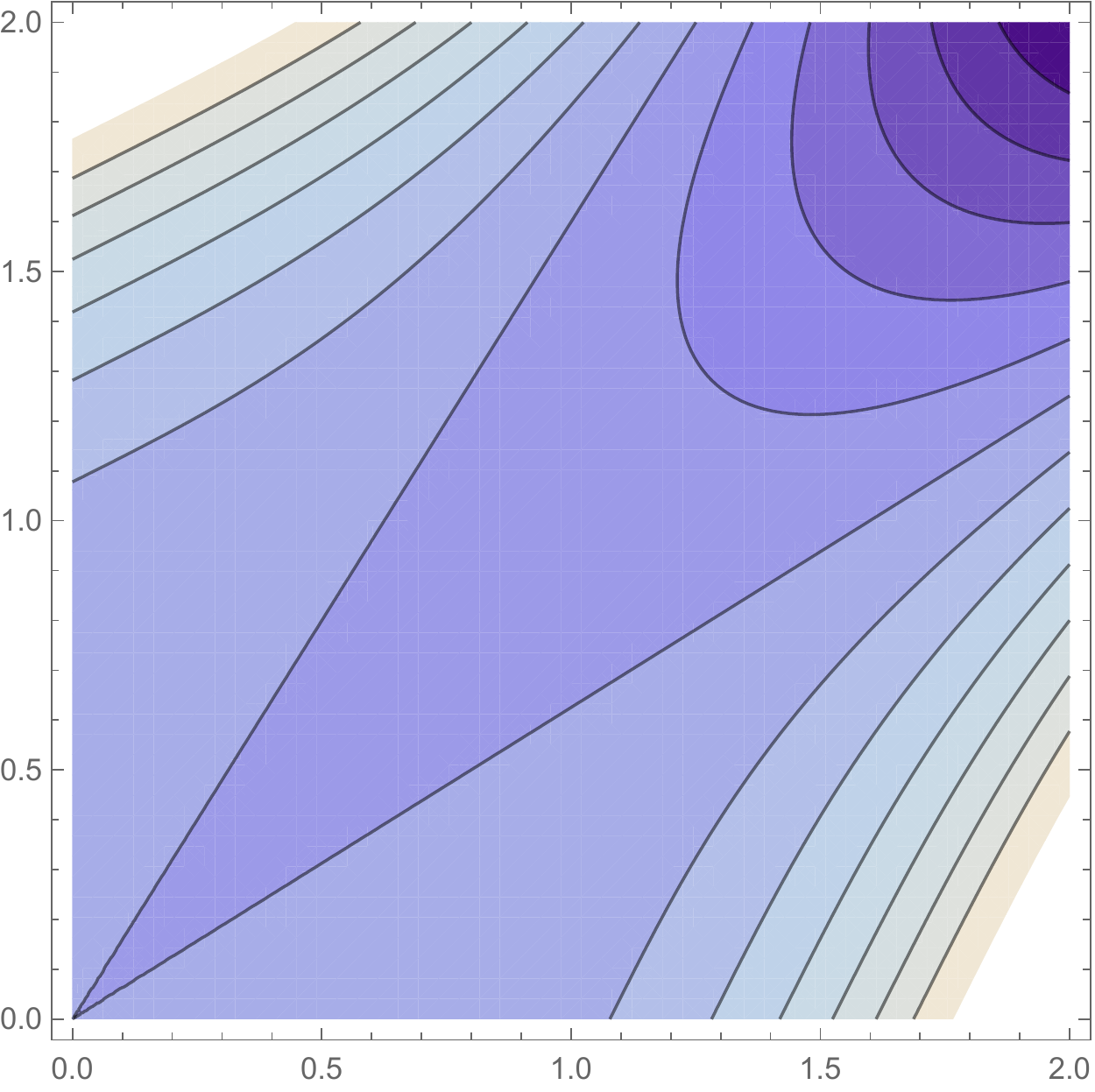}
			\caption{The sublevels of the energy $\WA$ on $\GLp(2)$ are connected but not compact.}
			\label{Fig3}
		\end{minipage}
		\hfill
		\begin{minipage}[t]{0.450\linewidth}
			\centering
			\includegraphics[height=.8\textwidth]{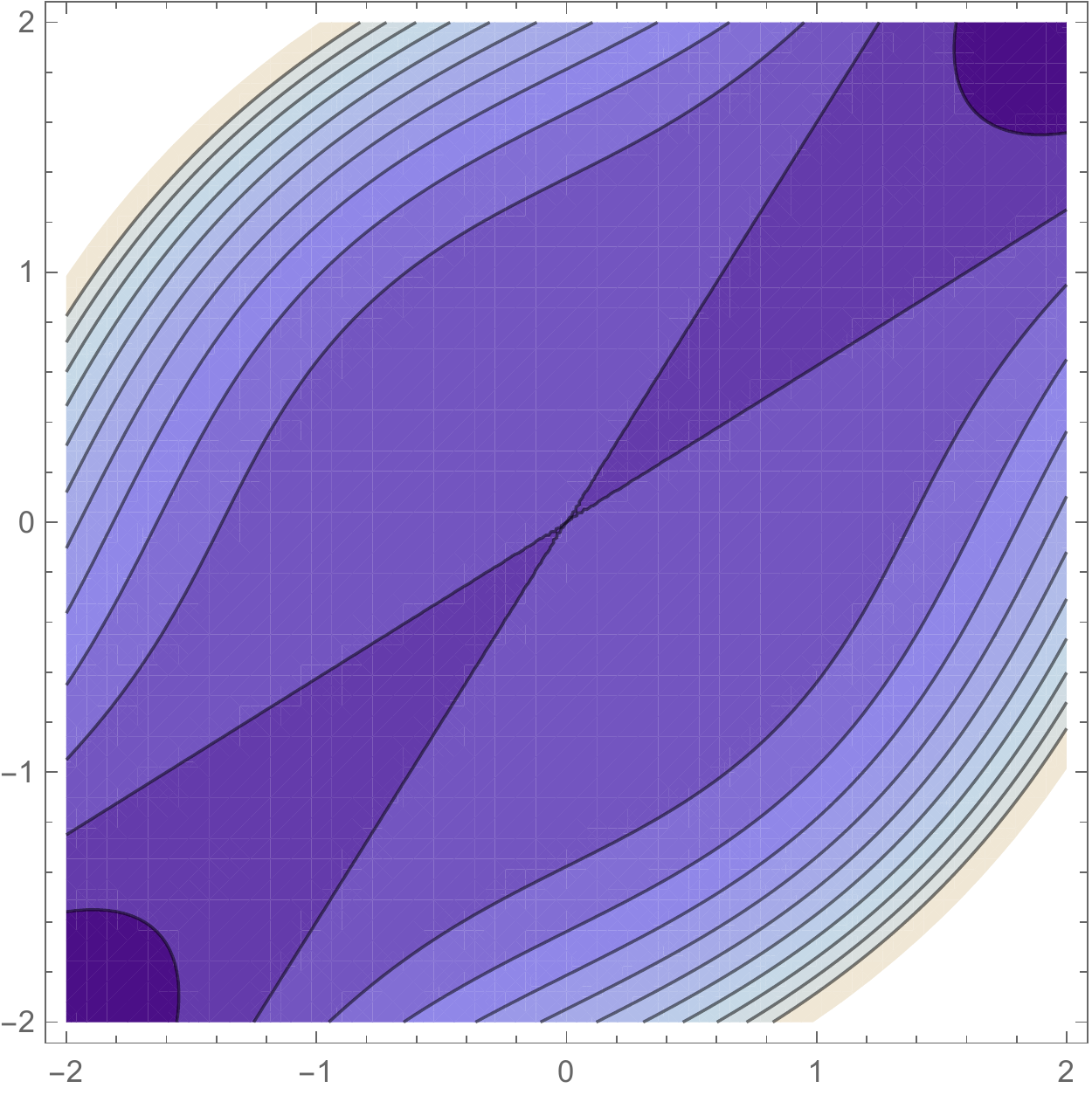}
			\caption{The sublevels of the extension of $\WA$ to $\R^{2\times 2}$ are neither connected nor compact.}
			\label{Fig4}
			\label{fig:firstDiagonalFigure}
		\end{minipage}	
	\end{center}
\end{figure}

However, the function \eqref{eq:aubertEnergy} is not a counterexample to Conjecture \ref{conjecture:mielke}, since the sublevel sets are not compact;\footnote{In Appendix \ref{appendix:aubertConnectedness}, we show that the sublevel sets are indeed connected.} simply note that
\begin{equation}\label{eq:nonCompactExample}
	\WA(\diag(t,t)) = -\frac{t^4}{6}\,,
\end{equation}
i.e.\ that each sublevel set contains an (unbounded) set of the form $\{a\cdot\id\setvert a>d\}$ for some $d\geq0$, as visualized\footnote{%
	Figs.~\ref{Fig3}, \ref{Fig4} and \ref{fig:firstDiagonalFigureInBlock}--\ref{fig:lastDiagonalFigure} show the graphs and sublevel set contours for different energies applied to \emph{diagonal} matrices, i.e.\ visualize the mappings $(x,y)\mapsto W(\diag(x,y))$ for some isotropic energy function $W$, with $(x,y)\in\R^2$ if $W$ is defined on $\R^{2\times2}$ or $(x,y)\in\Rp^2$ if $W$ is defined on the domain $\GLp(2)$. In the latter case, the figures can equivalently be interpreted as showing $W$ in terms of (unordered) singular values. Here and throughout, lower sublevel sets are represented by darker regions.%
}
in Figs.~\ref{Fig3} and \ref{Fig4}. Indeed, such negative growth conditions are often employed to simplify the otherwise cumbersome task of showing that a function is not polyconvex \cite{ball1976convexity,Dacorogna08,alibert1992example} (cf.~\cite[Remark (1)]{aubert1987counterexample}). Therefore, the examples of rank-one convex functions which are not polyconvex encountered in the literature are usually not suitable to falsify Conjecture \ref{conjecture:mielke}.

This observation is highlighted further by another important example (which is a homogeneous polynomial expression of grade 4 as well): for the family of isotropic energies
\begin{equation}\label{eq:dacorognaEnergy}
	\WADM^\gamma\col\R^{2\times2}\to\R\,,\qquad \WADM^\gamma(F) = \norm{F}^2\.(\norm{F}^2-2\gamma\.\det F)\,,\qquad \gamma\in\R\,,
\end{equation}
it was shown by Alibert, Dacorogna and Marcellini \cite{alibert1992example,dacorogna1988counterexample} that
\begin{align*}
	\WADM^\gamma \quad \text{is convex on } \R^{2\times2}\quad &\iff \quad \abs{\gamma}\leq \frac{2\sqrt{2}}{3}\approx 0.942809\,,\\[-0.3em]
	\WADM^\gamma\quad \text{is polyconvex on } \R^{2\times2}\quad &\iff \quad\abs{\gamma}\leq 1\,,\vphantom{\frac23}\\[-0.3em]
	\WADM^\gamma \quad \text{is quasiconvex on } \R^{2\times2}\quad &\iff \quad\abs{\gamma}\leq \gamma_q \ \ \text{with}\ \ \gamma_q>1\,,\vphantom{\frac23}\\[-0.3em]
	\WADM^\gamma \quad \text{is rank-one convex on } \R^{2\times2}\quad &\iff \quad \abs{\gamma}\leq \frac{2}{\sqrt{3}}\approx 1.1547
	\,.
\end{align*}
Again, in the non-polyconvex case $\abs{\gamma}>1$, the sublevel sets are not compact, since $\WADM(a\cdot\id)=4\.a^4(1-\gamma)<0$ for all $a>0$ if $\gamma>1$ and $\WADM(\diag(a,-a))=4\.a^4(1+\gamma)<0$ for all $a>0$ if $\gamma<-1$, where $\diag(x,y)\in\R^{2\times2}$ denotes the diagonal matrix with diagonal entries $x,y\in\R$.

\tikzset{
	state/.style={
	minimum height = 0.55cm, 
	outer sep = 0,
	inner sep = 0,
	black
	}
}
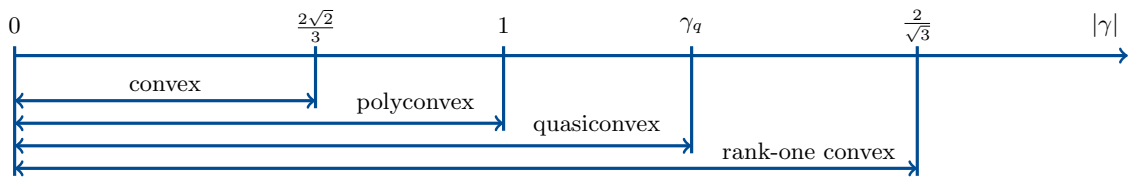
\begin{figure}[h!]
\centering
\small
\begin{tikzpicture}
	\node[state](1){$0$};
	\node[right of=1, node distance = 4cm, state](2){$\frac{2\sqrt{2}}{3}$};
	\node[right of=1, node distance = 6.5cm, state](3){$1$};
	\node[right of=1, node distance = 9cm, state](4){$\gamma_q$};
	\node[right of=1, node distance = 12cm, state](5){$\frac{2}{\sqrt{3}}$};
	\node[right of=1, node distance = 14.5cm, state](6){$\abs\gamma$};
	\path[very thick, udedarkblue]
		(1) edge ++(0,-2cm)
		(2) edge ++(0,-1.1cm)
		(3) edge ++(0,-1.4cm)
		(4) edge ++(0,-1.7cm)
		(5) edge ++(0,-2cm)
		(1)++(0,-0.4cm) edge[->] ($(6)+(0.3cm,-0.4cm)$)
		(1)++(0,-1cm) edge[<->] node[black,pos=0.5,above]{convex} ($(2)-(0,1cm)$)
		(1)++(0,-1.3cm) edge[<->] node[black,pos=0.82,above]{polyconvex} ($(3)-(0,1.3cm)$)
		(1)++(0,-1.6cm) edge[<->] node[black,pos=0.86,above]{quasiconvex} ($(4)-(0,1.6cm)$)
		(1)++(0,-1.9cm) edge[<->] node[black,pos=0.88,above]{rank-one convex} ($(5)-(0,1.9cm)$)
	;
\end{tikzpicture}
\caption{A schematic overview for the Alibert-Dacorogna-Marcellini family of energies. It is currently unknown whether $\gamma_q<\frac{2}{\sqrt{3}}$.}\label{schita}
\end{figure}

Now, for $\gamma\in\R$, consider the restriction 
\begin{equation}\label{eq:dacorognaEnergy2}
	\WADMp^\gamma = \WADM^\gamma\big|_{\GLp(2)}\col\GLp(2)\to\R\,,\quad \WADM^\gamma(F) = \norm{F}^2\.(\norm{F}^2-2\gamma\.\det F)
\end{equation}
of $\WADM^\gamma$ to $\GLp(2)$. Then $\WADMp^\gamma$ can be expressed in terms of (unordered) singular values $\lambda_1,\lambda_2$ of $F\in\GLp(2)$ via
\begin{equation}\label{eq:dacorognaEnergy2singularValues}
	\WADM^\gamma(F)=(\lambda_1^2+\lambda_2^2)^2-2 \,\gamma\,(\lambda_1^2+\lambda_2^2) \,\lambda_1 \lambda_2\,.
\end{equation}
In this case, the sublevel sets are not compact for any $\gamma\in\R$; observe that for every $c>0$, since
\[
	\lim_{n\to\infty}\,\WADMp\Bigl(\frac1n\cdot\id\Bigr)
	= \lim_{n\to\infty}\,\frac{4\.(1-\gamma)}{n^4}
	= 0\,,
\]
we find $\frac1n\cdot\id\in S_c$ for all sufficiently large $n\in\N$, thus the sublevel set $S_c$ contains a sequence without a subsequence which converges in $\GLp(2)$.

\begin{remark}
\label{remark:compactnessGrowthCondition}
	From generalizing the above argument, it follows that the compactness of sublevel sets for an energy on $\GLp(2)$ requires $W$ to satisfy the growth condition $W(F)\to\infty$ for $\det F\to0$. In particular, this condition is never satisfied by any polynomial function $W$, which immediately excludes the class of polynomial energies as possible counterexamples to Conjecture \ref{conjecture:mielke}.
\end{remark}

\begin{figure}[h!]\centering
	\begin{minipage}[t]{0.45\linewidth}
		\centering
		\includegraphics[height=.8\textwidth]{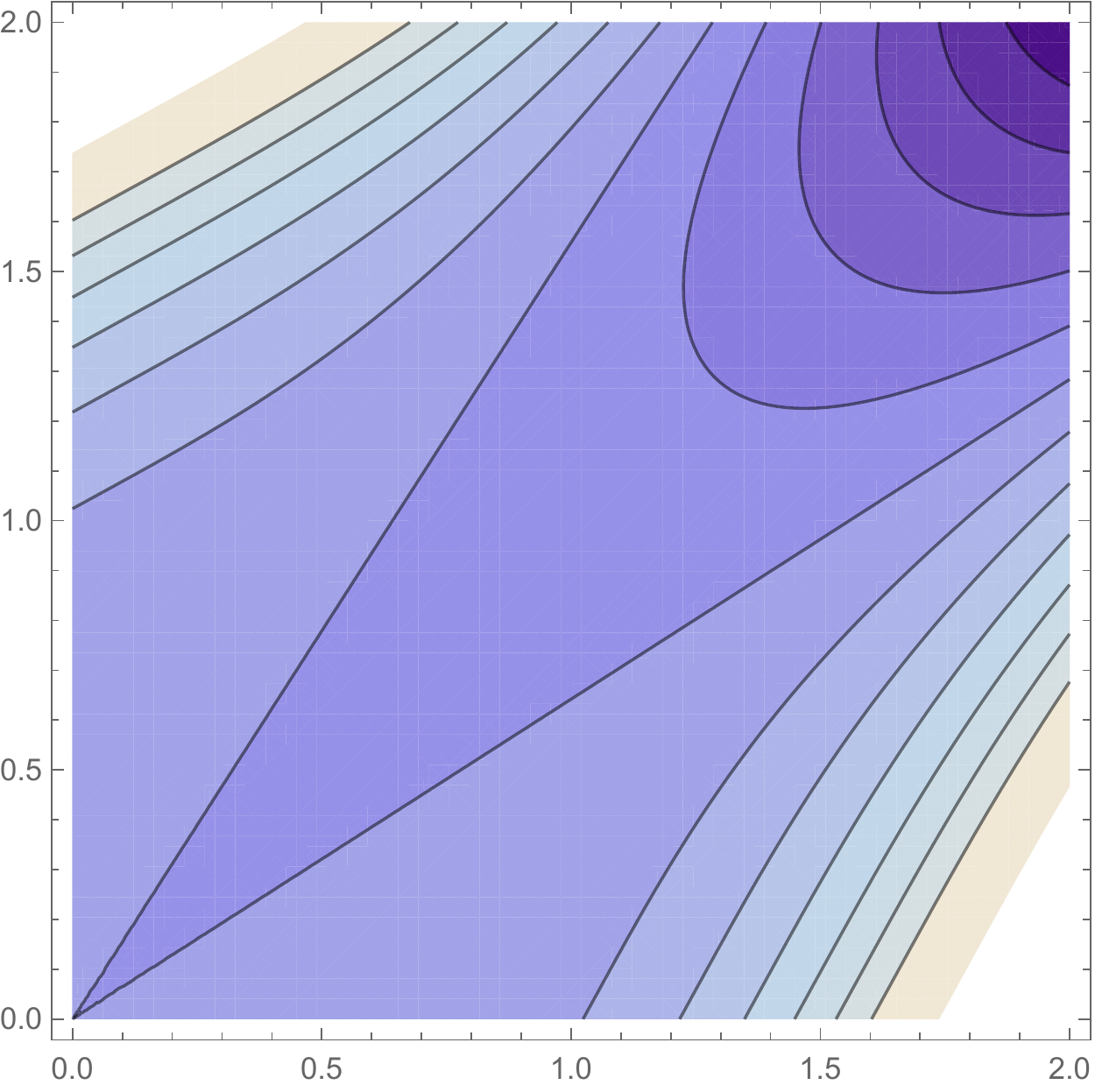}
		\caption{The sublevels of $\WADMp^{1.1}$ are connected, but unbounded and thus not compact.}
		\label{DM2}
		\label{fig:firstDiagonalFigureInBlock}
	\end{minipage}
	\hfill
	\begin{minipage}[t]{0.45\linewidth}
		\centering
		\includegraphics[height=.8\textwidth]{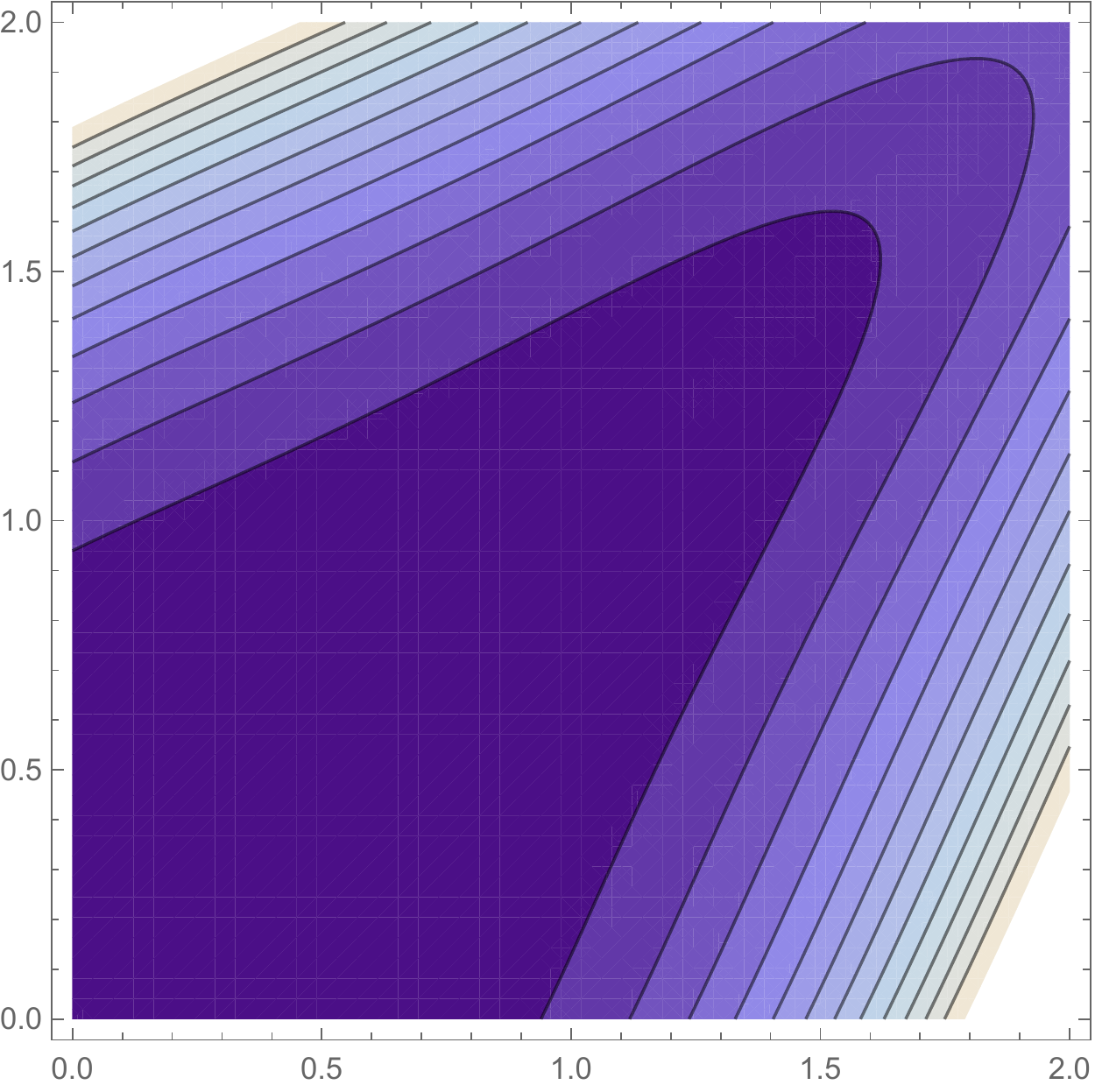}
		\caption{The sublevels of $\WADMp^{0.97}$, which are connected and bounded but not compact.}
		\label{DM1}
	\end{minipage}
\end{figure}

Another example of an $\OO(2)$-invariant energy which is rank-one convex but not polyconvex was introduced by \v{S}ilhav\'{y} \cite{silhavy2002invariant}, who proposed the function $\WS\col\R^{2\times 2}\to \R$ given by
\[
	\WS\col\R^{2\times 2}\to \R\,,\quad
	\WS(F)
	=\ghatS(\svmap(F))
	\qquad\text{with}\qquad
	\ghatS\col\Oset[2]\to\R\,,\quad
	\ghatS(\lambdahat_1,\lambdahat_2)
	=\begin{cases}
		\lambdahat_1\lambdahat_2 &:\lambdahat_1\leq 1\\
		\lambdahat_1+\lambdahat_2 -1&: \lambdahat_1\geq 1\\
	\end{cases}
\]
in terms of ordered singular values $\lambdahat_1,\lambdahat_2$. Again, if we consider the restriction $\WSp=\WS\restrict{\GLp(2)}$ of $\WS$ to $\GLp(2)$, then the sublevel sets (cf.~Figs.~\ref{FigS3} and \ref{FigS4}) are generally not compact due to the boundedness of $\WSp(F)$ for $\det F\to0$.

\begin{figure}[h!]\begin{center}
		\begin{minipage}[t]{0.54\linewidth}
			\centering
			\includegraphics[width=1\textwidth]{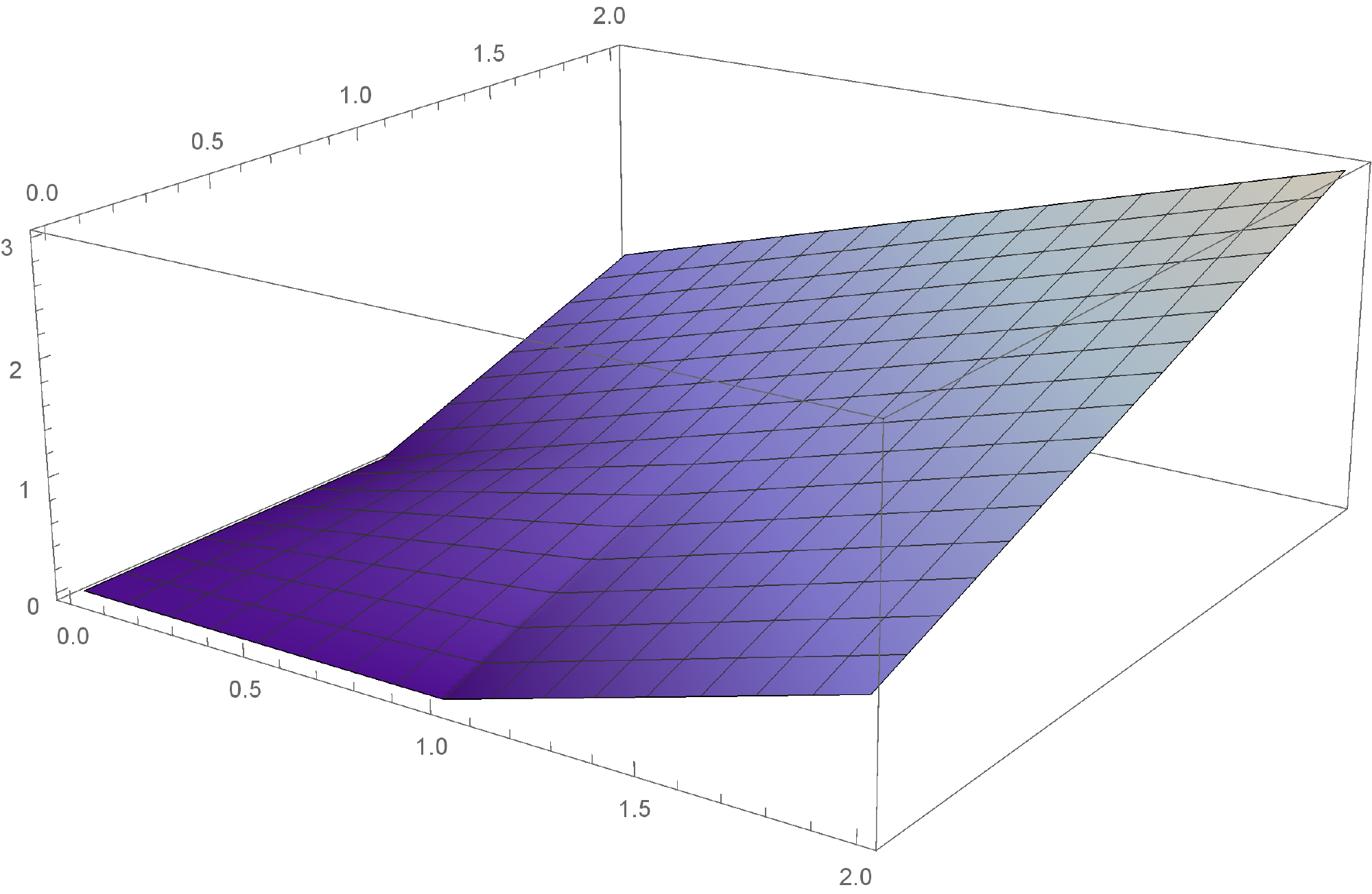}
			\caption{The energy $\WSp(F)$ on $\GLp(2)$ in terms of singular values of $F$.}
			\label{FigS3}
		\end{minipage}
		\hfill
		\begin{minipage}[t]{0.41\linewidth}
			\centering
			\includegraphics[height=0.85\textwidth]{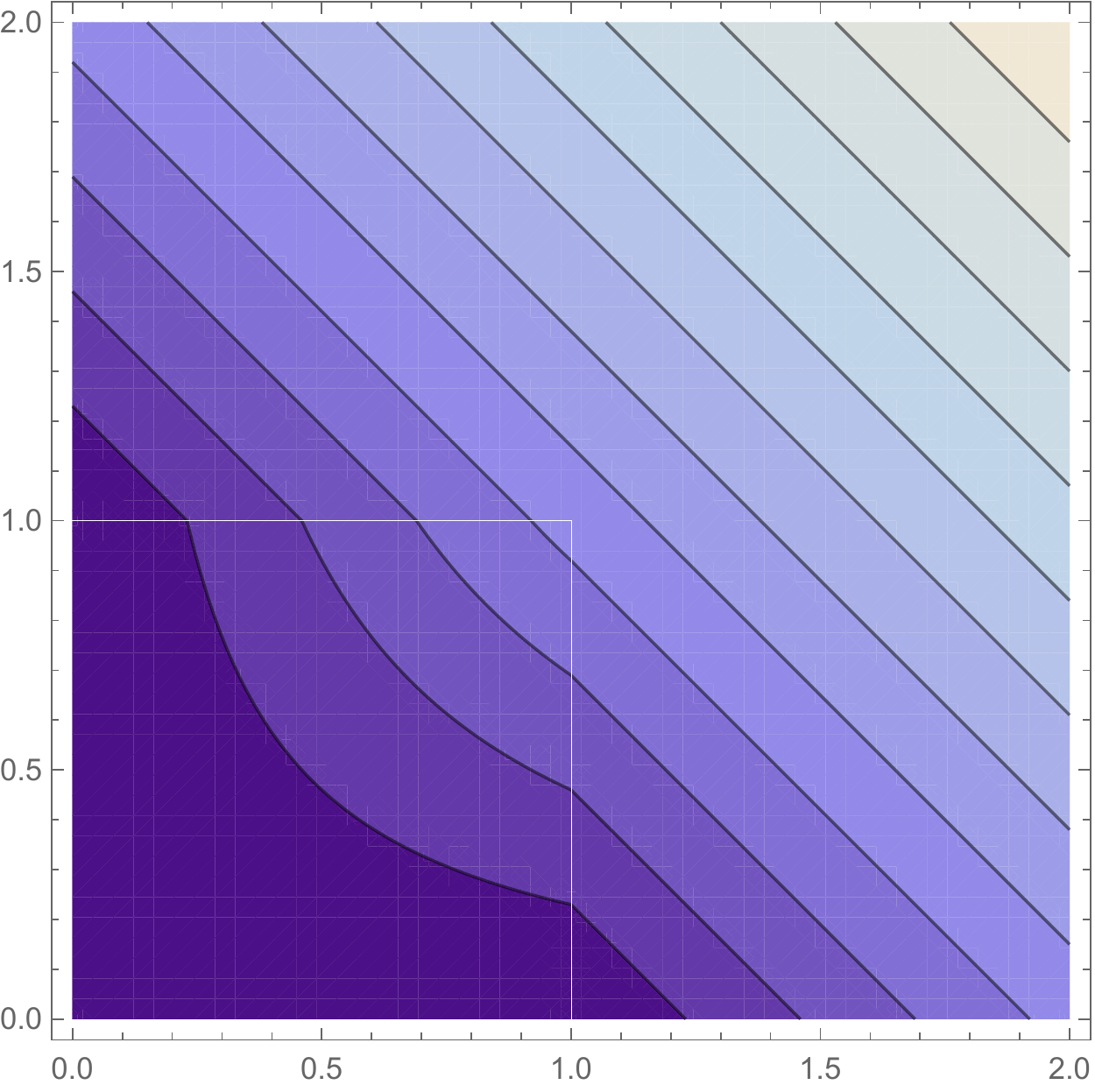}
			\caption{The sublevels of the energy $\WSp$ on $\GLp(2)$ are connected, but not compact.}
			\label{FigS4}
		\end{minipage}	
	\end{center}
\end{figure}
%
%
%
%
%

\pagebreak
\section{The counterexample}\label{section:counterExample}

In the following, we will show that the energy function
\begin{align}
\label{eq:counterexampleDefinition}
	\Wex\col\GLp(2)\to\R\,,\qquad \Wex(F) &= \frac{\opnorm{F}^2}{\det F} - \log\biggl(\frac{\opnorm{F}^2}{\det F}\biggr) + \log(\det F) + \frac{1}{\det F}\\
	&= \frac{\lambdamax}{\lambdamin} - \log\left(\frac{\lambdamax}{\lambdamin}\right) + \log(\lambdamax\lambdamin) + \frac{1}{\lambdamax\lambdamin}\,\nonumber
\end{align}
is not polyconvex, although it satisfies all the conditions posed in Conjecture \ref{conjecture:mielke}, i.e.\ $\Wex$ is isotropic and rank-one convex with connected, compact sublevel sets; here and in the following, we denote the singular values of $X\in\R^{2\times2}$ by $\lambdamax(X)\geq\lambdamin(X)$ and simply write $\lambdamax,\lambdamin$ for the ordered singular values of the energy's argument $F$, while $\opnorm{X}=\sup_{\norm{h}=1}\norm{X\, h}=\lambdamax(X)$ denotes the operator norm of any $X\in\R^{2\times2}$.

In order to establish that the desired properties hold for $\Wex$, we will require a number of auxiliary results concerning the sublevel sets of isotropic functions with a so-called \emph{volumetric-isochoric split}, i.e.\ functions of the form
\begin{equation}\label{eq:volIsoSplitGeneralForm}
	W\col\GLp(2)\to\R\,,\qquad W(F) = \Wiso(F) + \Wvol(\det F)
\end{equation}
with $\Wiso\col\GLp(2)\to\R$ and $\Wvol\col\Rp=(0,\infty)\to\R$ such that $\Wiso$ is \emph{isochoric}, i.e.\ satisfies $W(aF)=W(F)$ for all $a>0$. If $\Wiso$, and therefore $W$, is objective and isotropic, then any function $W$ of the form \eqref{eq:volIsoSplitGeneralForm} can be expressed as \cite{agn_martin2015rank}
\begin{equation}\label{eq:volIsoSplitGeneralFormSingularValues}
	W(F) = \hhat(K(F)) + f(\det F) = \underbrace{\hhat\left(\frac{\lambdamax}{\lambdamin}\right)}_{=\Wiso(F)} + \underbrace{f(\lambdamax\lambdamin) \vphantom{\left(\frac{\lambdamax}{\lambdamin}\right)}}_{=\Wvol(\det F)}
\end{equation}
for all $F\in\GLp(2)$ with singular values $\lambdamax\geq\lambdamin>0$ and with uniquely defined functions $\hhat\col[1,\infty)\to\R$ and $f\col(0,\infty)\to\R$. Here,
\begin{equation}\label{eq:Kdefinition}
	K\col\GLp(2)\to\R\,,\qquad K(F) \colonequals \frac{\opnorm{F}^2}{\det F} = \frac{\lambdamax}{\lambdamin}
\end{equation}
is the \emph{linear distortion} (or \emph{dilation}) function, which plays an important role in the theory of conformal and quasiconformal mappings \cite{astala2008elliptic} as well as for the characterization of rank-one convexity, quasiconvexity and polyconvexity of isochoric planar energy functions \cite{agn_martin2015rank,agn_martin2019envelope}. More specifically, the isochoric part $\Wiso$ of an energy $W$ of the form \eqref{eq:volIsoSplitGeneralForm} is rank-one convex, quasiconvex and polyconvex if and only if the function $\hhat\col[1,\infty)\to\R$ given by \eqref{eq:volIsoSplitGeneralFormSingularValues} is nondecreasing and convex \cite[Theorem 3.3]{agn_martin2015rank}. This equivalence between polyconvexity and rank-one convexity does not hold for the full energy $W$ itself, as the example $\Wex$ will show.

\begin{figure}[h!]\begin{center}
		\begin{minipage}[t]{0.54\linewidth}
			\centering
			\includegraphics[width=1\textwidth]{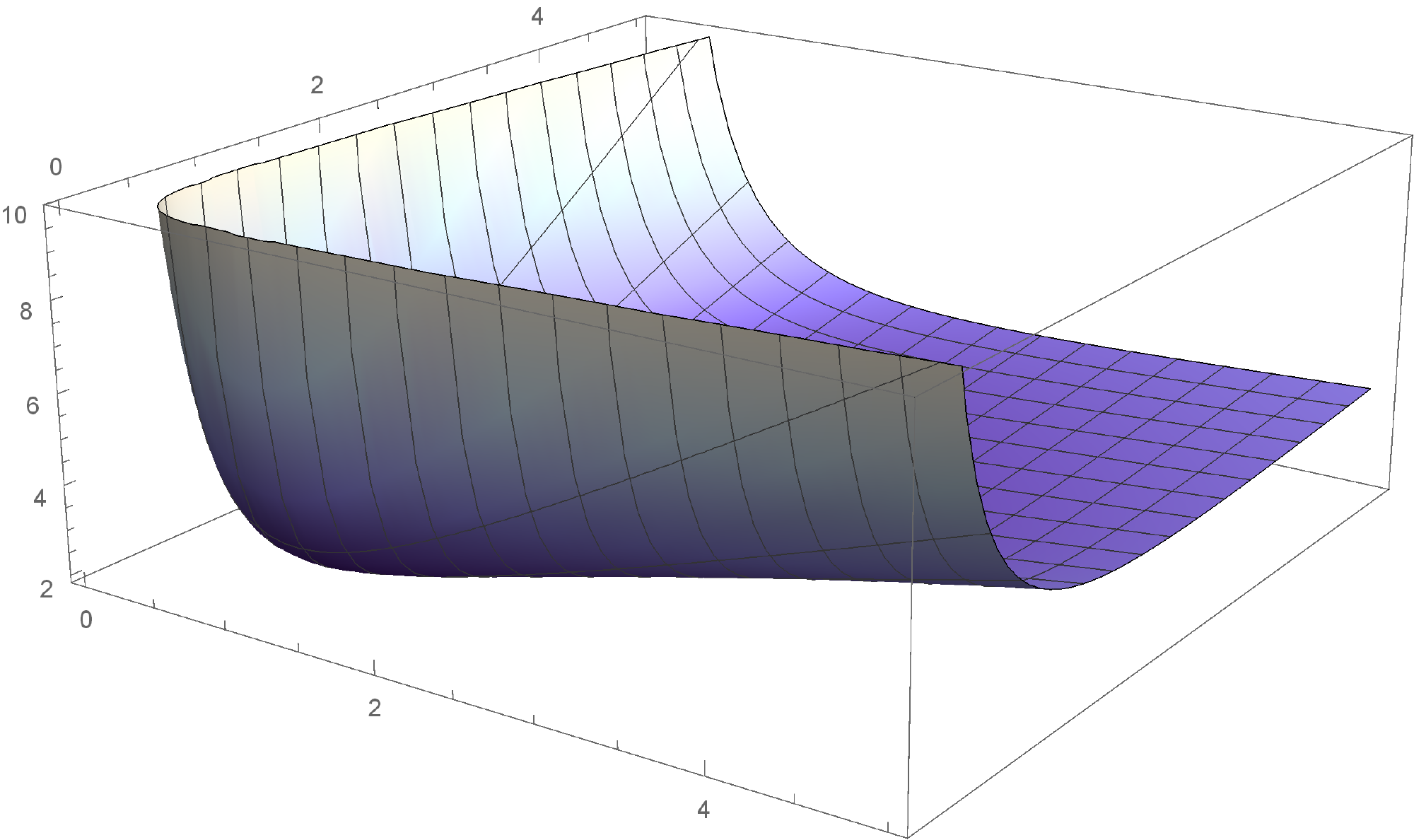}
			\caption{The energy function $\Wex$ in terms of singular values.}
			\label{FigW1}
		\end{minipage}
		\hfill
		\begin{minipage}[t]{0.35\linewidth}
			\centering
			\includegraphics[height=1\textwidth]{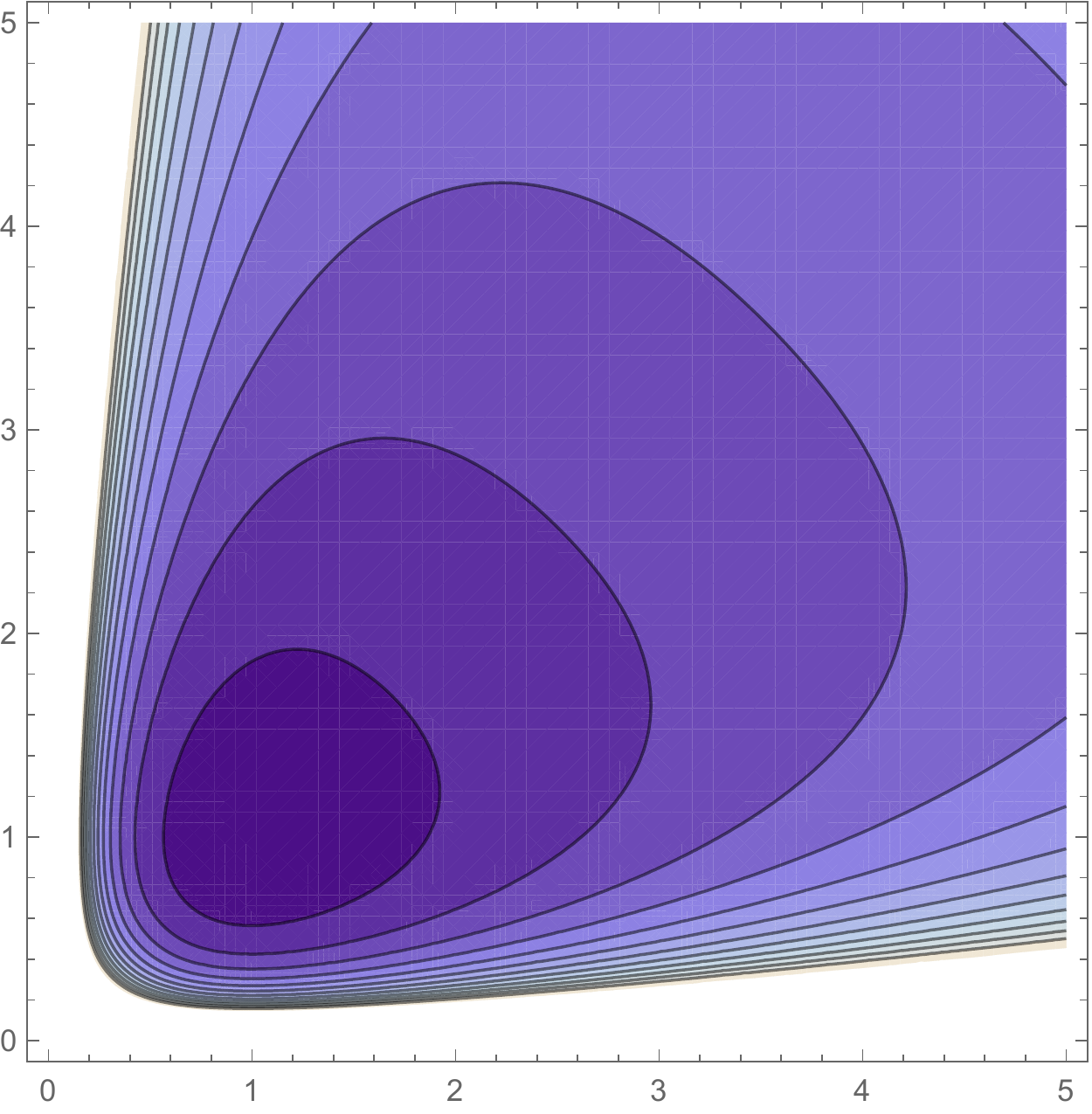}
			\caption{The sublevels of the energy $\Wex$ are compact and connected.}
			\label{FigW2}
			\label{fig:lastDiagonalFigure}
		\end{minipage}	
	\end{center}
\end{figure}

It is clear that $\Wex$ from \eqref{eq:counterexampleDefinition} can be expressed in the form \eqref{eq:volIsoSplitGeneralForm} with
\begin{equation}\label{eq:counterexampleIsoPart}
	 \Wiso(F) = K(F) - \log(K(F)) = \frac{\lambdamax}{\lambdamin} - \log\left(\frac{\lambdamax}{\lambdamin}\right)
\end{equation}
and
\begin{equation}\label{eq:counterexampleVolPart}
	 \Wvol(\det F) = \log(\det F) + \frac{1}{\det F} = \log(\lambdamax\lambdamin) + \frac{1}{\lambdamax\lambdamin}
\end{equation}
or, equivalently, in the form \eqref{eq:volIsoSplitGeneralFormSingularValues} with
\begin{equation}\label{eq:counterexampleVolIsoPartSingularValues}
	\hhat(t) = t-\log(t) \qquad\text{and}\qquad f(t)=\frac1t-\log\left(\frac{1}{t}\right)\,.
\end{equation}

As shown in Figs.\ \ref{FigW1} and \ref{FigW2}, the growth behaviour of $\Wex(F)$ for $\det F\to0$ clearly distinguishes $\Wex$ from the examples considered in Section \ref{section:previousExamples}.

\subsection{Auxiliary results for volumetric-isochorically split energy functions}

In order to establish $\Wex$ as a counterexample to Conjecture \ref{conjecture:mielke}, we first state a sufficient criterion for the compactness of the sublevel sets for energy functions with a volumetric-isochoric split.

\begin{lemma}\label{lemma:compactness}
	Let $W\col\GLp(2)\to\R$ be of the form \eqref{eq:volIsoSplitGeneralFormSingularValues} such that $\hhat,f$ are lower semicontinuous and
	\begin{equation}\label{eq:compactnessGrowthConditions}
		\lim_{t\to\infty}\hhat(t)=\lim_{t\to\infty}f(t)=\lim_{t\to0}f(t)=\infty\,.
	\end{equation}
	Then every sublevel set of $W$ is compact.
\end{lemma}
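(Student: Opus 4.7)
My plan is to show, for each $c \in \R$, that the sublevel set $S_c = \{F \in \GLp(2) : W(F) \leq c\}$ is bounded in $\R^{2\times 2}$, uniformly bounded away from the matrices with zero determinant, and closed as a subset of $\R^{2\times 2}$; Heine--Borel then delivers compactness. The key preliminary step is to promote the growth conditions, together with lower semicontinuity, into honest global lower bounds on $\hhat$ and $f$.

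First I would argue that $\hhat$ is bounded below: by $\lim_{t\to\infty}\hhat(t) = \infty$, any minimizing sequence $t_n$ lies in a bounded subset of $[1,\infty)$, hence has a subsequence converging to some $t^* \in [1,\infty)$, and lower semicontinuity at $t^*$ yields $\inf\hhat = \hhat(t^*) > -\infty$. The same argument, now using both $\lim_{t\to 0}f(t) = \infty$ and $\lim_{t\to\infty}f(t) = \infty$ to trap a minimizing sequence in a compact subinterval of $(0,\infty)$, produces $m_f := \inf f > -\infty$. Writing $m_{\hhat} := \inf \hhat$ and splitting the inequality $W(F) \leq c$ yields, on $S_c$, the two independent bounds $\hhat(K(F)) \leq c - m_f$ and $f(\det F) \leq c - m_{\hhat}$. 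Applying the three growth conditions \eqref{eq:compactnessGrowthConditions} once more, these translate into uniform bounds $K(F) \leq M$ and $a \leq \det F \leq b$ on $S_c$, for constants $M, a, b$ depending only on $c$, with $a > 0$.

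From $\lambdamax/\lambdamin \leq M$ together with $a \leq \lambdamax\lambdamin \leq b$, a short calculation pins both singular values of every $F \in S_c$ into a fixed compact subinterval of $(0,\infty)$, giving at once the boundedness of $S_c$ and a uniform positive lower bound on $\det F$. Closedness in $\R^{2\times 2}$ is then inherited from the lsc of $W$ on $\GLp(2)$ (which follows since $K$ and $\det$ are continuous on $\GLp(2)$ and $\hhat, f$ are lsc) together with continuity of the singular values: any $\R^{2\times 2}$-limit $F^*$ of a sequence $F_n \in S_c$ still satisfies $\lambdamin(F^*) > 0$ and hence lies in $\GLp(2)$, and lsc of $W$ at $F^*$ then gives $W(F^*) \leq \liminf_n W(F_n) \leq c$.

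The only genuinely subtle point is the first one: without lower semicontinuity, the growth conditions alone would merely force the sublevel sets of $\hhat$ and $f$ to be bounded, not to contain their infima. Once the finite values $m_{\hhat}, m_f$ are secured, the remainder is a routine translation between $(K(F), \det F)$-bounds and singular-value bounds.
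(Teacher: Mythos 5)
Your proof is correct and follows essentially the same route as the paper's: lower semicontinuity plus the growth conditions \eqref{eq:compactnessGrowthConditions} give finite infima for $\hhat$ and $f$, the additive split then yields uniform bounds on $K(F)$ and $\det F$ over each sublevel set, and these translate into singular-value bounds that deliver boundedness, a positive distance to $\partial\GLp(2)$, and closedness. The only cosmetic difference is that the paper runs the boundedness step through the identity $\opnorm{F}^2=K(F)\cdot\det F$ and the boundary-distance step through the Eckart--Young--Mirsky identity $\dist(F,\partial\GLp(2))=\lambdamin(F)$, whereas you pin both singular values directly into a fixed compact subinterval of $(0,\infty)$; the substance is identical.
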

\begin{proof}
	Since all sublevel sets of lower semicontinuous functions are closed (in this case relative to $\GLp(2)$), we only need to show that for each $c\in\R$, the corresponding sublevel set
	\[
		S_c \colonequals \{F\in\R^{2\times2} \setvert \det F > 0\,,\; W(F) \leq c\}
	\]
	is bounded and that $\dist(S_c,\partial\GLp(2))>0$. In order to simplify the computations, we will consider the distance on $\R^{2\times2}$ with respect to the operator norm, given by the largest singular value $\opnorm{X}=\lambdamax(X)$ of a matrix $X\in\R^{2\times2}$. First note that
	\begin{equation}\label{eq:opnormKdet}
		\opnorm{F}^2 = \frac{\lambdamax}{\lambdamin}\cdot\lambdamax\lambdamin = K(F)\cdot\det(F)
	\end{equation}
	and that the lower semicontinuity of $\hhat$ and $f$, together with \eqref{eq:compactnessGrowthConditions}, ensures that both $\hhat$ and $f$ are bounded below by some $d\in\R$.
	
	Now, let $c\in\R$. Then due to \eqref{eq:compactnessGrowthConditions}, there exists $r\geq1$ such that $\hhat(t)>c-d$ and $f(t)>c-d$ for any $t>r$. Since $\opnorm{F}^2>r^2$ implies that either $K(F)>r$ or $\det(F)>r$ by virtue of \eqref{eq:opnormKdet},
	\[
		W(F) = \hhat(K(F)) + f(\det(F)) > c-d+d = c
	\]
	for any $F\in\GLp(2)$ with $\opnorm{F}>r$. Thus $S_c$ is bounded.
	
	In order to establish a lower bound for $\dist(S_c,\partial\GLp(2))$, we first observe that, according to the Eckart-Young-Mirsky Theorem \cite{mirsky1960symmetric},\footnote{%
		Recall that the distance is taken with respect to the operator norm. Equality \eqref{eq:rankOneApproximation} can also easily be shown directly: on the one hand, for any $X\in\R^{2\times2}$ with $\rank(X)\leq1$ there exists $\xi\in\R^2$ with $X\xi=0$ and $\norm{\xi}=1$, thus $\opnorm{F-X}\geq\norm{(F-X)\xi}=\norm{F\xi}\geq\lambdamin$; on the other hand, $\opnorm{Q_1\diag(\lambdamax,\lambdamin)Q_2-Q_1\diag(\lambdamax,0)Q_2}=\lambdamin$ for $Q_1,Q_2\in\SO(2)$.%
	}
	\begin{align}
		\dist(F,\partial\GLp(2))
		&= \dist(F,\{X\in\R^{2\times2}\setvert\det(X)=0\}) \nonumber\\
		&= \dist(F,\{X\in\R^{2\times2}\setvert\rank(X)\leq1\})
		= \lambdamin(F)\,. \label{eq:rankOneApproximation}
	\end{align}
	Due to \eqref{eq:compactnessGrowthConditions}, there exists $r\leq1$ such that $\hhat(s)>c-d$ and $f(t)>c-d$ for all $t<r$ and all $s>\frac1r$. Then for any $F\in\GLp(2)$ with $\lambdamin<r$,
	\[
		r^2 > \lambdamin^2 = \frac{\lambdamax\lambdamin}{\frac{\lambdamax}{\lambdamin}} = \frac{\det(F)}{K(F)}
	\]
	and thus either $\det(F)>r$ or $K(F)<\frac1r$. Again, $W(F)>c$ in either case; therefore, $F\notin S_c$ for any $F\in\GLp(2)$ with $\dist(F,\partial\GLp(2)) = \lambdamin(F) < r$, which shows that $\dist(S_c,\partial\GLp(2))\geq r >0$.
\end{proof}

We also require a criterion for the sublevel sets of $W$ to be connected. Recall from Definition \ref{definition:q-Convexity} that a function is called q-convex if all its sublevel sets are convex.

\begin{lemma}\label{lemma:connectedness}
	Let $W\col\GLp(2)\to\R$ be of the form \eqref{eq:volIsoSplitGeneralFormSingularValues} such that $\hhat\col[1,\infty)\to \R$ is monotone and $f$ is q-convex. Then every sublevel set of $W$ is connected.
\end{lemma}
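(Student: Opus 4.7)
The strategy is to use isotropy of $W$ to reduce the claim to a planar statement about the pair $(K,d) \colonequals (\lambdamax/\lambdamin,\,\lambdamax\lambdamin)$, and then to exploit separately the monotonicity of $\hhat$ in the variable $K$ and the q-convexity of $f$ in the variable $d$.

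First I would reduce to a diagonal slice. Fix $c \in \R$ and let $S_c \colonequals \{F \in \GLp(2) \setvert W(F) \leq c\}$; I may assume $S_c \neq \emptyset$. Given any $F \in S_c$, take a singular value decomposition $F = Q_1 \diag(\lambdamax(F),\lambdamin(F))\, Q_2^T$ with $Q_1,Q_2 \in \SO(2)$ (possible because $\det F > 0$), and use the path-connectedness of $\SO(2)$ to choose continuous paths $Q_i(t)$, $t \in [0,1]$, from $Q_i$ to the identity. Isotropy of $W$ then yields $W(Q_1(t)\diag(\lambdamax(F),\lambdamin(F))Q_2(t)^T) = W(F) \leq c$ for every $t$, so $F$ is joined inside $S_c$ to its diagonal representative $\diag(\lambdamax(F),\lambdamin(F))$. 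Hence $S_c$ is path-connected provided any two diagonal matrices in $S_c$ can be joined by a path of diagonal matrices in $S_c$, and under the homeomorphism $(\lambda_1,\lambda_2) \mapsto (\lambda_1/\lambda_2,\,\lambda_1\lambda_2)$ from $\{\lambda_1 \geq \lambda_2 > 0\}$ onto $[1,\infty)\times(0,\infty)$ this is equivalent to connectedness of
\[
	E_c \colonequals \{(K,d)\in[1,\infty)\times(0,\infty) \setvert \hhat(K) + f(d) \leq c\}.
\]

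To show $E_c$ is connected I would join any two points $(K_1,d_1),(K_2,d_2) \in E_c$ by a two-segment polygonal path. Assume first that $\hhat$ is nondecreasing. For any $(K,d) \in E_c$ and $K' \in [1,K]$, monotonicity yields $\hhat(K') + f(d) \leq \hhat(K) + f(d) \leq c$, so the horizontal segment from $(K,d)$ to $(1,d)$ stays in $E_c$. Sliding both $(K_1,d_1)$ and $(K_2,d_2)$ to $(1,d_1)$ and $(1,d_2)$ this way, it remains to connect those two points. Both $d_1,d_2$ lie in $\{d > 0 \setvert f(d) \leq c - \hhat(1)\}$, which is an interval by q-convexity of $f$, so the vertical segment at $K = 1$ joining $(1,d_1)$ and $(1,d_2)$ also stays in $E_c$. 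If $\hhat$ is instead nonincreasing, the same scheme works with $K = 1$ replaced by any $K_0 \geq \max(K_1,K_2)$: monotonicity now gives $\hhat(K') \leq \hhat(K_i)$ for $K' \in [K_i,K_0]$, so the two horizontal slides remain in $E_c$, and the slice $\{K_0\}\times\{d > 0 \setvert f(d) \leq c - \hhat(K_0)\}$ is again an interval (by q-convexity) that contains both $d_1$ and $d_2$.

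The main subtlety I anticipate is to make the isotropic reduction genuinely produce a continuous path \emph{inside} $S_c$ rather than merely a statement about orbits, which is where the path-connectedness of $\SO(2)$ together with isotropy is essential; the other minor point is the symmetric treatment of the two monotonicity directions for $\hhat$. Beyond these, the argument is a direct pointwise use of the defining inequality $\hhat(K) + f(d) \leq c$ together with q-convexity of $f$.
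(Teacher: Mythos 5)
Your proposal is correct and follows essentially the same route as the paper: reduce to diagonal matrices via isotropy and the path-connectedness of $\SO(2)$, then move in the $(K,\det)$-coordinates, using the monotonicity of $\hhat$ for the $K$-direction and the q-convexity of $f$ (sublevel sets of $f$ are intervals) for the determinant direction. The differences are cosmetic—you use straight segments in the $(K,d)$-plane toward a common $K$-value, whereas the paper writes explicit matrix curves with constant determinant and then constant $K$—plus your explicit treatment of the case of nonincreasing $\hhat$, which the paper handles only implicitly through its normalization $K(\Ftilde)\leq K(F)$.
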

\begin{proof}
\newcommand{\pows}{^{\raisebox{-.245em}{\!\text{\normalsize$s$}}}}
	In order to show that the sublevel set $S_c$ is (path) connected for any $c\in\R$ under the stated conditions, we will explicitly construct a curve connecting arbitrary $F,\Ftilde\in S_c$. The construction will be split into four parts, with the first and last one describing a continuous, orthogonal basis change. The second part will be constructed such that the linear distortion $K$ is decreasing and the determinant is constant along the curve, while for the third part, the linear distortion $K$ is kept constant. This will allow us to utilize the monotonicity of the isochoric part $\hhat$ and the q-convexity of the volumetric part $f$, respectively, to show that the curve remains in the sublevel set (cf.\ Figure \ref{fig:curveWithinSublevelsCounterexample}).
	
	For $c\in\R$, let $F,\Ftilde\in S_c$ and assume without loss of generality that $K(\Ftilde) \leq K(F)$. First, choose $Q_1,Q_2,\Qtilde_1,\Qtilde_2\in\SO(2)$ such that $\diag(\lambda_1,\lambda_2)\colonequals Q_1FQ_2$ and $\diag(\lambdatilde_1,\lambdatilde_2)\colonequals \Qtilde_1\Ftilde\Qtilde_2$ are diagonal with $\lambda_1\geq\lambda_2>0$ and $\lambdatilde_1\geq\lambdatilde_2>0$. Let
	\begin{equation}
		\curve_1\col[0,1]\to\GLp(2)\,,\qquad \curve_1(s) = Q_1^s FQ_2^s\,;
	\end{equation}
	here, $Q^s\colonequals \exp(s\log Q)$ for $Q\in\SO(2)$, where $\log$ denotes the principal matrix logarithm on $\SO(2)$ or, more explicitly,
	\begin{equation}
		Q^s = \matr{\cos(s\.\alpha)&\sin(s\.\alpha)\\-\sin(s\.\alpha)&\cos(s\.\alpha)} \qquad\text{for}\qquad Q = \matr{\cos(\alpha)&\sin(\alpha)\\-\sin(\alpha)&\cos(\alpha)} \quad\text{with }\;\alpha\in(-\pi,\pi]\,.
	\end{equation}
	In particular, $\curve_1$ is continuous with $\curve_1(0)=F$ and $\curve_1(1)=\diag(\lambda_1,\lambda_2)$. Furthermore, $s\mapsto W(\curve_1(s))$ is constant due to the objectivity and isotropy of $W$ and thus, in particular, $\curve_1(s)\in S_c$ for all $s\in[0,1]$.
	
	Now, for $\mu_1=\frac{\sqrt{\lambda_1\.\lambda_2}}{\sqrt{\lambdatilde_1\.\lambdatilde_2}}\,\lambdatilde_1$ and $\mu_2=\frac{\sqrt{\lambda_1\.\lambda_2}}{\sqrt{\lambdatilde_1\.\lambdatilde_2}}\,\lambdatilde_2$, let
	\begin{equation}
		\curve_2\col[0,1]\to\GLp(2)\,,\qquad \curve_2(s) = \diag(\lambda_1^{1-s}\.\mu_1^s,\lambda_2^{1-s}\.\mu_2^s)\,.
	\end{equation}
	Then $\curve_2$ is continuous with $\curve_2(0)=\diag(\lambda_1,\lambda_2)=\curve_1(1)$ and $\curve_2(1)=\diag(\mu_1,\mu_2)$. Furthermore,
	\[
		\det(\curve_2(s)) = \lambda_1^{1-s}\.\mu_1^s\.\lambda_2^{1-s}\.\mu_2^s = \lambda_1\lambda_2\cdot \biggl(\frac{\mu_1\mu_2}{\lambda_1\lambda_2}\biggr)\pows = \lambda_1\lambda_2\cdot \left(\frac{\frac{\sqrt{\lambda_1\.\lambda_2}}{\sqrt{\lambdatilde_1\.\lambdatilde_2}}\,\lambdatilde_1\cdot\frac{\sqrt{\lambda_1\.\lambda_2}}{\sqrt{\lambdatilde_1\.\lambdatilde_2}}\,\lambdatilde_2}{\lambda_1\lambda_2}\right)\pows = \lambda_1\lambda_2
	\]
	is independent of $s$ and, since $\frac{K(\Ftilde)}{K(F)}\leq1$ by assumption, the mapping
	\[
		s\mapsto K(\curve_2(s)) = \frac{\lambda_1^{1-s}\.\mu_1^s}{\lambda_2^{1-s}\.\mu_2^s} = \frac{\lambda_1}{\lambda_2}\cdot \biggl(\frac{\lambda_2\.\mu_1}{\lambda_1\.\mu_2}\biggr)\pows = \frac{\lambda_1}{\lambda_2}\cdot \left(\frac{K(\Ftilde)}{K(F)}\right)\pows
	\]
	is nonincreasing on $[0,1]$. Therefore, the assumed monotonicity of the function $\hhat$ implies that the mapping $s\mapsto W(\curve_2(s))=h(K(\curve_2(s)))+f(\det(\curve_2(s)))$ is nonincreasing, hence $\curve_2(s)\in S_c$ for all $s\in[0,1]$.
	
	Next, let
	\[
		\curve_3\col[0,1]\to\GLp(2)\,,\qquad \curve_3(s) = \left(\frac{\sqrt{\lambdatilde_1\.\lambdatilde_2}}{\sqrt{\lambda_1\.\lambda_2}}\right)\pows\cdot\,\diag(\mu_1,\mu_2)\,.
	\]
	Then $\curve_3$ is continuous with $\curve_3(0)=\diag(\mu_1,\mu_2)=\curve_2(1)$ and $\curve_3(1)=\diag(\lambdatilde_1,\lambdatilde_2)$.
	We observe that $K$ is constant along $\curve_3$ and that
	\[
		\min\{\det(\curve_3(0))\,, \det(\curve_3(1))\}
		\leq\det(\curve_3(s))
		\leq \max\{\det(\curve_3(0))\,, \det(\curve_3(1))\}
	\]
	for all $s\in[0,1]$. Furthermore, the volumetric part $f\col(0,\infty)\to\R$ is q-convex by assumption which, in particular \cite{crouzeix1982criteria}, implies that $f(t)\leq \max\{f(a),\,f(b)\}$ for any $0<a\leq t \leq b$. We therefore find
	\[
		f(\det (\curve_3(s))) \leq \max\{f(\det(\curve_3(0) ))\,,\;f(\det(\curve_3(1)))\}
	\]
	and thus, since $s\mapsto K(\curve_3(s))$ is constant (i.e.\ $K(\curve_3(s))=K(\curve_3(0))=K(\curve_3(1))$ for all $s\in[0,1]$),
	\begin{align*}
		W(\curve_3(s)) &= f(\det (\curve_3(s))) + \hhat(K(\curve_3(s)))\\
		&\leq \max\{f(\det(\curve_3(0))) + \hhat(K(\curve_3(s)))\,,\; f(\det(\curve_3(1))) + \hhat(K(\curve_3(s)))\}\\
		&= \max\{f(\det(\curve_3(0))) + \hhat(K(\curve_3(0)))\,,\; f(\det(\curve_3(1))) + \hhat(K(\curve_3(1)))\}\\
		&= \max\{W(\curve_3(0))\,,\; W(\curve_3(1))\}
		\;\leq\; c
	\end{align*}
	for all $s\in[0,1]$.
	
	Finally, the curve
	\[
		\curve_4\col[0,1]\to\GLp(2)\,,\qquad \curve_4(s) = (\Qtilde_1^T)^s \diag(\lambdatilde_1,\lambdatilde_2)(\Qtilde_2^T)^s
	\]
	continuously connects $\diag(\lambdatilde_1,\lambdatilde_2)$ with $\Ftilde$ such that $W$ is constant along $\curve_4$. Therefore, the combined curve
	\[
		\curve\col[0,4]\to\GLp(2)\,,\qquad \curve(s) = \curve_i(s)
		\quad\text{ for }\; s\in[i-1,i]\,,
		\;\;i\in\{1,2,3,4\}
	\]
	is (well defined and) continuous with $\curve(0)=F$, $\curve(4)=\Ftilde$ and $\curve(s)\in S_c$ for all $s\in[0,4]$.
\end{proof}

\begin{figure}[h!]
	\begin{center}
		\begin{tikzpicture}
		\def\Dconstant{3.5}
		\def\Kconstant{.735}
			\begin{axis}[
				axis x line=middle,axis y line=middle,
				x label style={at={(current axis.right of origin)},anchor=north, below},
				xlabel={$\lambda_1$}, ylabel={$\lambda_2$},
				xmin=-0.07, xmax=4.9,
				ymin=-0.07, ymax=4.9,
				xtick={3.43},
				ytick={3.185},
				xticklabels={$\mu_1$},
				yticklabels={$\mu_2$},
				hide obscured x ticks=false,
				width=.49\linewidth,
				height=.49\linewidth
			]
				\addplot[domain=0:4.41,dotted,thick] {x} node[pos=.63, above left] {$\lambda_1=\lambda_2$};
				\addplot[domain=.343:sqrt(\Dconstant/\Kconstant),blue,dashed,thick] {\Dconstant/x}
					node[pos=.7, above right] {$\lambda_1\lambda_2\equiv \det(F)$};
				\addplot[domain=sqrt(\Dconstant/\Kconstant):3.43,<-,ultra thick,blue] {\Dconstant/x}
					node[pos=1.001,draw,circle,fill, inner sep=0.147em]{}
					node[pos=1.001, above right] {$F$}
					node[pos=.49, below left] {$\curve_2$};
				\addplot[domain=3.43:4.9,blue,dashed,thick] {\Dconstant/x};
				\addplot[domain=0:sqrt(\Dconstant/\Kconstant),red,dashed,thick] {\Kconstant*x}
					node[pos=.392, below right] {$\frac{\lambda_1}{\lambda_2}\equiv K(\Ftilde)$};
				\addplot[domain=sqrt(\Dconstant/\Kconstant):4.312,->,ultra thick,red] {\Kconstant*x}
					node[pos=1.029,draw,circle,fill, inner sep=0.147em]{}
					node[pos=1.029, below right] {$\Ftilde$}
					node[pos=.49, below right] {$\curve_3$};
				\addplot[domain=4.312:4.9,red,dashed,thick] {\Kconstant*x};
			\end{axis}
		\end{tikzpicture}
		\hfill
		\begin{tikzpicture}
				\def\Kconstant{1.26}
		\def\Dconstant{3.5}
			\begin{axis}[
				axis x line=middle,axis y line=middle,
				x label style={at={(current axis.right of origin)},anchor=north, below},
				xlabel={$s$}, ylabel={$W(\curve(s))$},
				xmin=-0.07, xmax=4.13,
				ymin=-.07, ymax=.847,
				xtick={1,2,3,4},
				ytick=\empty,
				hide obscured x ticks=false,
				width=.588\linewidth,
				height=.49\linewidth
			]
				\addplot[domain=0:1.001, thick] {.49}
					node[pos=.49,below] {$\curve_1$};
				\addplot[domain=1.001:2.002, thick,blue] {.49-0.343*(x-1.001)^2}
					node[pos=.63, below left] {$\curve_2$};
				\addplot[domain=2.002:2.506, thick,red] {.49-0.343*(1.001)^2-.147*(x-2.002)};
				\addplot[domain=2.506:3.003, thick,red] {.49-0.343*(1.001)^2-.147*(.504)+3.43*((x-2.1)^(1/7)-(2.506-2.1)^(1/7))}
					node[pos=.49, above left] {$\curve_3$};
				\addplot[domain=3.003:3.997, thick] {.49-0.343*(1.001)^2-.147*(.504)+3.43*((3.003-2.1)^(1/7)-(2.506-2.1)^(1/7))}
					node[pos=.49, below] {$\curve_4$};
			\end{axis}
		\end{tikzpicture}
		\vspace*{-1.47em}
	\par\end{center}
	\caption{%
	\label{fig:curveWithinSublevelsCounterexample}
		Along $\curve_2$, the linear distortion $K=\frac{\lambda_1}{\lambda_2}$ is decreasing and the determinant is constant, so the energy is nonincreasing; along $\curve_3$, $K$ is constant, so the curve does not leave the sublevel set due to the q-convexity of the energy with respect to the determinant.%
	}
\end{figure}
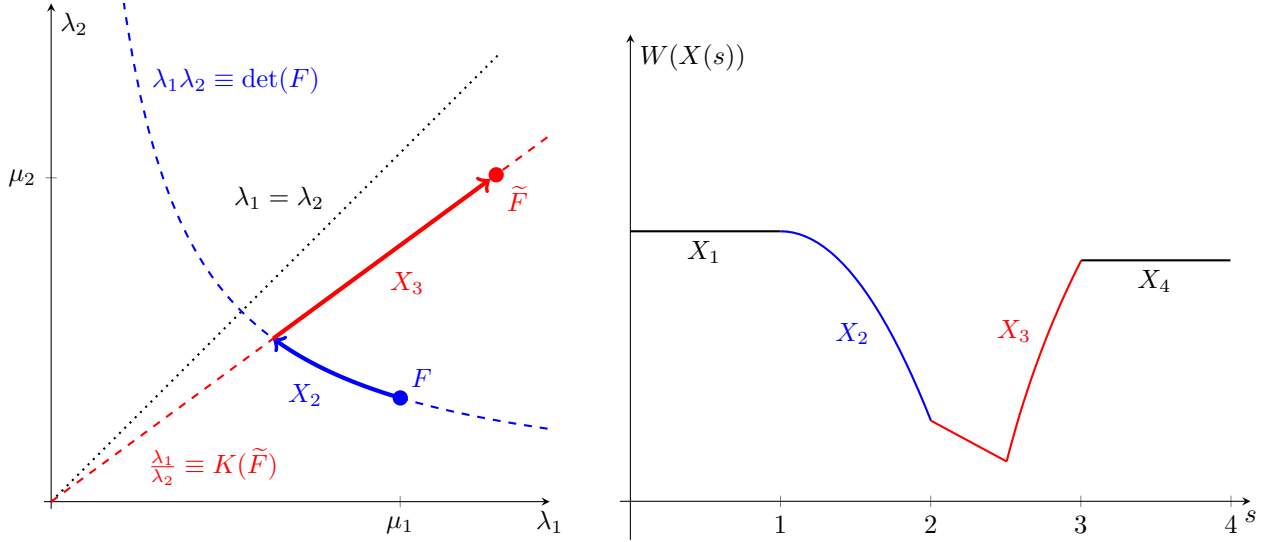

Of course, in order to show that $\Wex$ given by \eqref{eq:counterexampleDefinition} is a counterexample to Conjecture \ref{conjecture:mielke}, the rank-one convexity of $\Wex$ must be established as well. The following criterion, which is based on an earlier result by Knowles and Sternberg \cite{knowles1976failure,knowles1978failure} for twice differentiable functions (cf.\ Appendix \ref{section:knowlesSternberg}) and which is applicable to volumetric-isochorically split energies in the planar case, has been the subject of a recent contribution \cite{agn_voss2019volIsLog}.

\begin{lemma}[\cite{agn_voss2019volIsLog}]\label{lemma:rankOneConvexity}
	For $f,\hunordered\in C^2((0,\infty))$ with $\hunordered\left(\frac1t\right)=\hunordered(t)$, let $W(F)=\hunordered\bigl(\frac{\lambda_1}{\lambda_2}\bigr)+f(\lambda_1\lambda_2)$ for all $F\in\GLp(2)$ with singular values $\lambda_1,\lambda_2$. Then $W$ is rank-one convex on $\GLp(2)$ if and only if
	\begin{itemize}
		\item[i)] \quad$\displaystyle h_0+f_0\geq 0$,
		\item[ii)] \quad$\displaystyle \hunordered'(t)\geq 0\qquad\text{for all }\; t\geq 1$,
		\item[iii)]\quad$\displaystyle \frac{2t}{t-1}\.\hunordered'(t)-t^2\.\hunordered''(t)+f_0\geq 0$\quad or\quad $\displaystyle a(t)+\left[b(t)-c(t)\right]f_0\geq 0\qquad\text{for all }\; t\in(0,\infty)\,,\;t\neq 1$,
		\item[iv)]\quad$\displaystyle \frac{2t}{t+1}\.\hunordered'(t)+t^2\.\hunordered''(t)-f_0\geq 0\quad\text{or}\quad a(t)+\left[b(t)+c(t)\right]f_0\geq 0\qquad\text{for all }\; t\in(0,\infty)$,
	\end{itemize}
	where
	\begin{align*}
		a(t)&=t^2(t^2-1)\.h'(t)h''(t)-2t\.h'(t)^2\,,\qquad b(t)=\left(t^2+3\right)h'(t)+2t\.(t^2+1)\.h''(t)\,,\\
		c(t)&=4t\left(h'(t)+t\.h''(t)\right)
	\end{align*}
	and
	\[
		h_0=\inf_{t\in(0,\infty)}t^2\.\hunordered''(t)\,,\qquad f_0=\inf_{t\in(0,\infty)}t^2\.f''(t)\,.\directqed
	\]
\end{lemma}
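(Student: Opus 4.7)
The plan is to reduce rank-one convexity to the classical Knowles--Sternberg characterization for isotropic $C^2$-functions in two dimensions (cf.\ Appendix \ref{section:knowlesSternberg}). For $W(F)=g(\lambda_1,\lambda_2)$ represented in terms of (unordered) singular values, Knowles--Sternberg asserts that rank-one convexity on $\GLp(2)$ is equivalent to a family of pointwise inequalities: separate convexity $\partial_i^2 g\geq 0$, the Baker--Ericksen inequality $(\partial_1 g-\partial_2 g)/(\lambda_1-\lambda_2)\geq 0$ together with its symmetric companion, and a tension--extension inequality controlling a mixed second-order expression. The first step is simply to substitute $g(\lambda_1,\lambda_2)=\hunordered(\lambda_1/\lambda_2)+f(\lambda_1\lambda_2)$ into each of these conditions, using the symmetry $\hunordered(1/t)=\hunordered(t)$ to restrict attention to $t=\lambda_1/\lambda_2\geq 1$ where convenient.

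With $t=\lambda_1/\lambda_2$ and $\delta=\lambda_1\lambda_2$, the chain rule gives
\[
\partial_1 g=\tfrac{1}{\lambda_2}\hunordered'(t)+\lambda_2\,f'(\delta)\,,\qquad
\partial_2 g=-\tfrac{\lambda_1}{\lambda_2^2}\hunordered'(t)+\lambda_1\,f'(\delta)\,,
\]
with analogous expressions for the second derivatives. The Baker--Ericksen inequalities collapse, after clearing denominators, to the monotonicity statement (ii) on $\hunordered$. Separate convexity, evaluated at a configuration where the $\hunordered$- and $f$-contributions decouple and are rescaled by $t^2$ and $\delta^2$ respectively, yields the uniform bound (i) in terms of $h_0$ and $f_0$; here it is essential that $h_0$ and $f_0$ are defined as infima over \emph{all} $t,\delta\in(0,\infty)$, so that the worst-case configuration is captured in a single scalar bound.

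The heart of the argument is the tension--extension inequality, which after substitution becomes a quadratic expression in $f''(\delta)$ (equivalently, in $f_0$ after replacing $\delta^2 f''(\delta)$ by its infimum) that must be nonnegative for every $t$. Such a quadratic is satisfied in either of two complementary regimes: either the leading $\hunordered$-dependent term already has the correct sign on its own (giving the first branch of each of (iii), (iv), namely the expressions $\tfrac{2t}{t-1}\hunordered'-t^2\hunordered''+f_0\geq 0$ and its $+$-variant), or a discriminant-type expression, which upon expansion is precisely $a(t)+[b(t)\pm c(t)]f_0$, is nonnegative. The two sign choices correspond to rank-one directions $H=\xi\otimes\eta$ that are transverse versus coaxial to the principal frame of $F$; more precisely, they arise from parameterizing $\xi,\eta$ by angles and noting that the quadratic form in the angular variables diagonalizes into two independent scalar inequalities.

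The main obstacle is the bookkeeping needed to establish the disjunctive form of (iii) and (iv), in both directions. Sufficiency requires checking that each branch \emph{individually} guarantees nonnegativity of the tension--extension expression along every rank-one direction. Necessity demands constructing, for any $t$ at which \emph{both} branches fail, an explicit rank-one perturbation $H$ together with a base matrix $F$ with prescribed $t=\lambda_1/\lambda_2$ and with determinant $\delta$ chosen close to the infimum defining $f_0$, along which the second variation of $W$ at $F$ in direction $H$ is negative. The symmetry $\hunordered(1/t)=\hunordered(t)$ plays a recurring role throughout, reducing otherwise symmetric pairs of conditions to a single inequality in the regime $t\geq 1$ and ensuring that the two sign branches in (iii) and (iv) are not further redundant.
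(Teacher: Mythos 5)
First, a point of reference: the paper does not prove Lemma~\ref{lemma:rankOneConvexity} at all --- it is imported from \cite{agn_voss2019volIsLog} and closed with a QED box, the only internal hint being that it rests on the Knowles--Sternberg criterion. So your overall route (substitute the split $g(\lambda_1,\lambda_2)=\hunordered(\lambda_1/\lambda_2)+f(\lambda_1\lambda_2)$ into the Knowles--Sternberg conditions) is the intended one, and your preliminary computations that do appear are correct; in particular the Baker--Ericksen quotient indeed reduces to $2t\,\hunordered'(t)/(\lambda_1^2-\lambda_2^2)$, giving condition~ii).

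However, what you have written is a plan rather than a proof: the two places where the actual mathematical content of the lemma lives are asserted, not established. The first is the claim that the ``discriminant-type expression'' is \emph{precisely} $a(t)+[b(t)\pm c(t)]f_0$. A priori the product $g_{11}g_{22}$ and the square of the mixed term are each \emph{quadratic} in $f''(\delta)$ and also contain $f'(\delta)$; the criterion is only expressible through $f''$ alone because the $(f'')^2$-terms cancel (both carry the coefficient $\lambda_1^2\lambda_2^2$) and all occurrences of $f'$ cancel as well. None of this is verified, and without it the specific forms of $a,b,c$ are unsupported. The second, and more serious, gap is the quantifier exchange. The Knowles--Sternberg conditions are pointwise in $(t,\delta)$ with $\delta=\lambda_1\lambda_2$, so for each fixed $t$ one must verify a disjunction of the schematic form ``$X(t)-\phi\geq 0$ or $a(t)+[b(t)-c(t)]\phi\geq 0$'' for \emph{every} value $\phi=\delta^2 f''(\delta)$, $\delta\in(0,\infty)$. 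The lemma replaces this by the same disjunction evaluated at the single scalar $f_0=\inf_\delta\delta^2 f''(\delta)$. This is not a formal consequence: in iv) the quantity $f_0$ enters the first branch with a \emph{negative} sign, and the coefficients $b(t)\mp c(t)$ in the second branches need not be nonnegative, so validity of a branch at $f_0$ does not propagate monotonically to larger values of $\phi$, and a universally quantified disjunction does not commute with taking an infimum in one branch. Justifying this reduction (including the case where the infimum is not attained), and, for necessity, exhibiting an explicit base point $F$ and rank-one direction $H$ when both branches fail, is the substance of the proof in \cite{agn_voss2019volIsLog}; your proposal names these tasks but does not carry them out.
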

Note that the function $\hunordered\col(0,\infty)\to\R$, which represents the isochoric part of $W$ in terms of the ratio $\frac{\lambda_1}{\lambda_2}$ of the (not necessarily ordered) singular values, can be obtained from $\hhat\col[1,\infty)\to\R$ as given in \eqref{eq:volIsoSplitGeneralFormSingularValues} by setting
\begin{equation}\label{ht}
	\hunordered(t) \colonequals
	\begin{cases}
		\hhat(t) &: t\geq1\,,\\
		\hhat\left(\frac1t\right) &: t<1\,.
	\end{cases}
\end{equation}

\begin{remark}
	The energy $\Wex$ can be written in the form
	\[
		\Wex(F)=h\left(\frac{\lambdamax}{\lambdamin}\right)+\hunordered\left(\frac{1}{\lambdamax\lambdamin}\right)\,,
	\]
	where $\hunordered\col(0,\infty)\to\R$ is the function related to $\hhat\col[1,\infty)\to\R$ with $\hhat(t)=t-\log(t)$ by \eqref{ht}.
\end{remark}
%
%
%
%
%
\subsection{Main properties of the counterexample}%
\label{section:assemblingTheProof}

We can now prove Proposition \ref{prop:conjectureFalse}, i.e.\ demonstrate that Conjecture \ref{conjecture:mielke} does not hold, by showing that $\Wex$ is indeed a counterexample.
\begin{proposition}
	The function
	\begin{equation}\label{eq:counterexampleRepetition}
		\Wex(F) = \frac{\lambdamax}{\lambdamin} - \log\left(\frac{\lambdamax}{\lambdamin}\right) + \log(\lambdamax\lambdamin) + \frac{1}{\lambdamax\lambdamin} = \hhat(K(F)) + f(\det F)
	\end{equation}
	with
	\[
		\hhat(t) = t-\log(t) \qquad\text{and}\qquad f(t)=\hunordered\left(\frac{1}{t}\right)=\log(t)+\frac1t\,,
	\]
	where $\lambdamax\geq\lambdamin>0$ are the singular values of $F\in\GLp(2)$, is rank-one convex and isotropic with compact and connected sublevel sets, but not polyconvex.
\end{proposition}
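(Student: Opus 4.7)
The plan is to verify each of the four claimed properties separately, invoking the preparatory lemmas of the preceding subsection. \emph{Isotropy} is immediate, since $\Wex$ depends on $F$ only through the singular values $\lambdamax$ and $\lambdamin$. For \emph{compactness} of the sublevel sets, I would apply Lemma~\ref{lemma:compactness} with $\hhat(t)=t-\log t$ on $[1,\infty)$ and $f(t)=\log t+1/t$ on $(0,\infty)$; both are smooth and hence lower semicontinuous, and one reads off $\hhat(t)\to\infty$ as $t\to\infty$ as well as $f(t)\to\infty$ as $t\to 0^+$ or $t\to\infty$. For \emph{connectedness} of the sublevel sets, I would apply Lemma~\ref{lemma:connectedness}: the identity $\hhat'(t)=1-1/t\ge 0$ on $[1,\infty)$ gives the required monotonicity of the isochoric part, and $f'(t)=(t-1)/t^2$ changes sign only at $t=1$, so $f$ is unimodal and therefore q-convex.

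For \emph{rank-one convexity} I would invoke Lemma~\ref{lemma:rankOneConvexity}. After extending $\hhat$ symmetrically to $\hunordered$ on $(0,\infty)$ via~\eqref{ht}, one computes $t^2\hunordered''(t)=1$ on $[1,\infty)$ and $t^2\hunordered''(t)=2/t-1\ge 1$ on $(0,1)$, so $h_0=1$; similarly $t^2 f''(t)=(2-t)/t$ decreases to $-1$ as $t\to\infty$, so $f_0=-1$. Condition~(i) then reads $h_0+f_0=0$, (ii) is the monotonicity already verified, and a short case analysis (treating $t>1$ and $0<t<1$ separately) shows that the \emph{first} expression in each of (iii) and (iv) is nonnegative; for $t>1$ the expression in (iii) in fact collapses to zero identically. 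Thus $\Wex$ sits precisely at the boundary of rank-one convexity.

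The main obstacle is \emph{non-polyconvexity}. My plan is to apply \v{S}ilhav\'y's criterion (Theorem~\ref{polycrit}) after using the simplification
\[
    \ghat(\lambda_1,\lambda_2)=\frac{\lambda_1}{\lambda_2}+2\log\lambda_2+\frac{1}{\lambda_1\lambda_2},
\]
from which the partial derivatives $\partial_{\lambda_i}\ghat$ and hence the admissible interval for the parameter $c$ can be computed explicitly at any reference point $\gamma\in\Oset[2]$. To exhibit a failure I would search for $\gamma$ with $\gamma_1\gamma_2>2$, so that the volumetric part is strictly concave in $\det F$ there ($f''<0$), together with a comparison point $\nu\in\Oset[2]$ for which the second-order inequality of the criterion cannot be met by any admissible $c$. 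Pinning down such a concrete pair $(\gamma,\nu)$ is the delicate step, precisely because $\Wex$ lies on the boundary of rank-one convexity and the gap to polyconvexity is therefore necessarily tight; it is driven by the strict concavity of $\Wvol$ at large determinants having to overwhelm the upper admissible slope $\frac{\partial_{\lambda_1}\ghat+\partial_{\lambda_2}\ghat}{\gamma_1+\gamma_2}$. Once a violating pair $(\gamma,\nu)$ is identified, the inequality fails by an elementary computation, completing the proof.
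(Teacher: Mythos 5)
Your treatment of isotropy, compactness, connectedness and rank-one convexity follows the paper's proof essentially verbatim: the same lemmas are invoked with the same data, and your computations of $h_0=1$ and $f_0=-1$ and the case analysis for conditions (iii) and (iv) of Lemma~\ref{lemma:rankOneConvexity} agree with the paper (in fact the first expression in (iii) vanishes identically on \emph{both} branches $t\geq 1$ and $t<1$, not just for $t>1$). Your simplification $\ghat(\lambda_1,\lambda_2)=\frac{\lambda_1}{\lambda_2}+2\log\lambda_2+\frac{1}{\lambda_1\lambda_2}$ is also correct.

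The genuine gap is the non-polyconvexity argument, which is the crux of the whole counterexample. You correctly identify \v{S}ilhav\'y's criterion (Theorem~\ref{polycrit}) as the tool and correctly observe that the obstruction should come from the region where $f''(\det F)<0$ (i.e.\ $\det F>2$), but you never exhibit the pair $(\gamma,\nu)$, and you yourself flag this as ``the delicate step.'' Without a concrete violating pair and the accompanying verification that no admissible $c$ exists, nothing has been proved: every other property of $\Wex$ is a routine consequence of the preparatory lemmas, and a function satisfying all of them \emph{could} still be polyconvex. The paper takes $\gamma=(e^4,e^3)$ and $\nu=(e,1)$, computes the admissible interval $\bigl[-\tfrac{1+e^8}{e^{14}},\,-\tfrac{1+e-3e^8+e^9}{e^{14}(1+e)}\bigr]$ for $c$, and shows the \v{S}ilhav\'y inequality forces $c\leq\tfrac{2-3e^3-2e^7+e^9-4e^{10}}{e^{11}(e^3-1)^2}\approx-0.00377$, which lies strictly below the lower endpoint $\approx-0.00248$; note that since $(\nu_1-\gamma_1)(\nu_2-\gamma_2)>0$ here, the binding constraint is the \emph{lower} endpoint $-\tfrac{\partial_{\lambda_1}\ghat-\partial_{\lambda_2}\ghat}{\gamma_1-\gamma_2}$ of the interval, not the upper admissible slope as your heuristic suggests. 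You would need to supply such an explicit pair and carry out this numerical comparison to close the argument.
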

\begin{corollary*}[Proposition \ref{prop:conjectureFalse}]
	Conjecture \ref{conjecture:mielke} does not hold.
\end{corollary*}
\begin{proof}
	The function $\Wex$ is obviously objective and isotropic. Since $\hhat$ and $f$ are lower semicontinuous with $\lim_{t\to\infty}\hhat(t)=\lim_{t\to\infty}f(t)=\lim_{t\to0}f(t)=\infty$, Lemma \ref{lemma:compactness} shows that the sublevel sets of $\Wex$ are compact. Furthermore, $\hhat$ is monotone increasing on $[1,\infty)$ and $f$ is q-convex on $(0,\infty)$, thus the sublevel sets are also connected due to Lemma \ref{lemma:connectedness}.
	
	\bigskip
	
	It remains to show that $\Wex$ is rank-one convex\footnote{%
		An alternative proof of the rank-one convexity of $\Wex$, based directly on the classical Knowles-Sternberg criterion, is given in Appendix \ref{section:knowlesSternberg}.%
	}
	but not polyconvex. In order to prove the rank-one convexity, we will apply Lemma \ref{lemma:rankOneConvexity}. Since\footnote{%
		Note that $\hunordered$ is two-times continuously differentiable.%
	}
	\[
		\hunordered(t)
		\;=\;
		\begin{cases}
			\hhat(t) &: t\geq1\\
			\hhat\left(\frac1t\right) &: t<1
		\end{cases}
		\quad=\quad
		\begin{cases}
			t-\log(t) &: t\geq1\\
			\frac1t+\log(t) &: t<1
		\end{cases}
	\]
	for $t>0$, we find
	\begin{align}
		h_0 &= \inf_{t\in(0,\infty)}t^2\.\hunordered''(t) = \min \left\{ \inf_{t\in[1,\infty)}t^2\.\hunordered''(t)\,,\; \inf_{t\in(0,1)}t^2\.\hunordered''(t) \right\} \nonumber\\
		&= \min \left\{ \inf_{t\in[1,\infty)} t^2\cdot\frac{1}{t^2}\,,\; \inf_{t\in(0,1)} t^2\cdot \left( \frac{2}{t^3} -\frac{1}{t^2} \right) \right\}
		= \min \left\{ 1\,,\; \inf_{t\in(0,1)} \left( \frac{2}{t} -1 \right) \right\}
		= 1
	\end{align}
	as well as
	\begin{align*}
		f_0 &= \inf_{t\in(0,\infty)}t^2\.f''(t) = \inf_{t\in(0,\infty)} t^2\cdot \left( -\frac{1}{t^2} + \frac{2}{t^3} \right) = \inf_{t\in(0,\infty)} -1+\frac{2}{t} = -1\,.
	\end{align*}
	Then $h_0+f_0=0$, thus condition i) in Lemma \ref{lemma:rankOneConvexity} is satisfied. Since $\hunordered'(t)=\hhat'(t)=1-\frac1t\geq0$ for all $t\geq1$, condition ii) is fulfilled as well. Next, we compute
	\begin{align*}
		\frac{2t}{t-1}\,\hunordered'(t)-t^2\.\hunordered''(t)+f_0
		&= \frac{2t}{t-1}\,\hhat'(t)-t^2\.\hhat''(t)+f_0\\
		&= \frac{2t}{t-1}\cdot \left( 1-\frac1t \right) - t^2\cdot \frac{1}{t^2} - 1
		= \frac{2t}{t-1}\cdot \frac{t-1}{t} - 2
		\;=\; 0
	\end{align*}
	for $t\geq1$ as well as
	\begin{align}
		\frac{2t}{t-1}\,\hunordered'(t)-t^2\.\hunordered''(t)+f_0
		&= \frac{2t}{t-1}\cdot \left( -\frac{1}{t^2}+\frac1t \right) - t^2\cdot \left( \frac{2}{t^3} - \frac{1}{t^2} \right) - 1 \nonumber\\
		&= \frac{2t}{t-1}\cdot \frac{t-1}{t^2} - t^2\cdot \frac{2-t}{t^3} - 1
		= \frac{2}{t} - \frac{2-t}{t} - 1
		\;=\; 0
	\end{align}
	for $t<1$, which shows that condition iii) is also satisfied. Finally, for condition iv), we find
	\[
		\frac{2t}{t+1}\.\hunordered'(t)+t^2\.\hunordered''(t)-f_0
		= \frac{2t}{t+1}\cdot\frac{t-1}{t} + t^2\cdot\frac{1}{t^2} + 1
		= 2\, \left( \frac{t-1}{t+1}+1 \right)
		\geq 0
	\]
	for all $t\geq1$ and
	\begin{align*}
		\frac{2t}{t+1}\.\hunordered'(t)+t^2\.\hunordered''(t)-f_0
		&= \frac{2t}{t+1}\cdot\frac{t-1}{t^2} + t^2\cdot\frac{2-t}{t^3} + 1\\
		&= \frac{2t-2}{t\.(t+1)} + \frac{2-t}{t} + 1
		= \frac{2t-2}{t\.(t+1)} + \frac2t
		= \frac{2t-2 + 2\.(t+1)}{t\.(t+1)}
		= \frac{4}{t+1}
		\;\geq\; 0
	\end{align*}
	for all $t<1$. Therefore, according to Lemma \ref{lemma:rankOneConvexity}, the function $\Wex$ is rank-one convex.
	
	\bigskip
	
	Finally, in order to show that $\Wex$ is not polyconvex, we will apply \v{S}ilhav\'{y}'s result given in Proposition \ref{polycrit}. The representation $\ghat$ of $\Wex$ in terms of ordered singular values is given by
	\begin{equation}\label{eq:counterexampleSingularValueRepresentation}
		\Wex(F) = \ghat(\svmap(F))
		\qquad\text{with}\qquad
		\ghat(\lambdahat_1,\lambdahat_2) = \frac{\lambdahat_1}{\lambdahat_2} - \log\left(\frac{\lambdahat_1}{\lambdahat_2}\right) + \log(\lambdahat_1\lambdahat_2) + \frac{1}{\lambdahat_1\lambdahat_2}\,.
	\end{equation}
	We compute
	\[
		\frac{\partial \ghat}{\partial \lambdahat_1}=\frac{\lambdahat_1^2-1}{\lambdahat_1^2 \lambdahat_2},\qquad 
		\frac{\partial \ghat}{\partial \lambdahat_2}=-\frac{\lambdahat_1^2-2 \lambdahat_1 \lambdahat_2+1}{\lambdahat_1 \lambdahat_2^2}
	\]
	and choose
	\[
		\gamma_1 = e^4, \qquad \gamma_2 = e^3, \qquad \nu_1 = e, \qquad \nu_2 = 1\,.
	\]
	Then
	\begin{alignat}{2}
		\label{eq:nonPolyValues1}
		\frac{\partial \ghat}{\partial\lambdahat_1}(e^4,e^3) &= \frac{e^8-1}{e^{11}}\,,
		\qquad&
		\frac{\partial \ghat}{\partial\lambdahat_2}(e^4,e^3) &= -\frac{e^7 (e-2)+1}{e^{10}}\,,
		\\
		\notag
		-\frac{\frac{\partial \ghat}{\partial\lambdahat_1}(e^4,e^3)-\frac{\partial \ghat}{\partial\lambdahat_2}(e^4,e^3)}{e^4-e^3} &= -\frac{1+e^8}{e^{14}}\,,
		\qquad&
		\frac{\frac{\partial \ghat}{\partial\lambdahat_1}(e^4,e^3)+\frac{\partial \ghat}{\partial\lambdahat_2}(e^4,e^3)}{e^4+e^3} &= -\frac{1+e-3 e^8+e^9}{e^{14} (1+e)}
	\end{alignat}
	and therefore, if $\Wex$ were polyconvex,
	\begin{equation}\label{eq:nonPolyMielkeCondition}
		\begin{aligned}
			&\exists\;c\in \left[ -\frac{1+e^8}{e^{14}}\,,\; -\frac{1+e-3 e^8+e^9}{e^{14} (1+e)} \right]
			\quad \text{ such that}
			\\
			&\quad \ghat(e,1)\geq \ghat(e^4,e^3)+\frac{\partial \ghat}{\partial\lambdahat_1}(e^4,e^3)\cdot(e-e^4)
			+\frac{\partial \ghat}{\partial\lambdahat_2}(e^4,e^3)\cdot(1-e^3)+c\, (e-e^4)\,(1-e^3)
		\end{aligned}
	\end{equation}
	according to Proposition \ref{polycrit}. Since
	\[
		\ghat(e^4,e^3) = e+6+\frac{1}{e^7}
		\qquad\text{and}\qquad
		\ghat(e,1) = \frac{1}{e}+e\,,
	\]
	combining \eqref{eq:nonPolyMielkeCondition} with \eqref{eq:nonPolyValues1} yields that polyconvexity of $\Wex$ would imply
	\[
		\exists\; c\in \left[-\frac{1+e^8}{e^{14}},-\frac{1+e-3 e^8+e^9}{e^{14} (1+e)}\right]
		\qquad \text{such that}\quad
		c\leq \frac{2-3 e^3-2 e^7+e^9-4 e^{10}}{e^{11} \left(e^3-1\right)^2}\,,
	\]
	in contradiction to
	\[
		-0.00377147 \approx \frac{2-3 e^3-2 e^7+e^9-4 e^{10}}{e^{11} \left(e^3-1\right)^2}<-\frac{1+e^8}{e^{14}}\approx-0.00247958\,.
	\]
	Therefore, the energy $\Wex$ cannot be polyconvex. 
\end{proof}

\section{Conclusion}

We have provided yet another example of a rank-one convex isotropic energy function on $\GLp(2)$ which is not polyconvex. In contrast to classical examples encountered in the literature (cf.\ Section \ref{section:previousExamples}), however, the energy $\Wex$ given in \eqref{eq:counterexampleDefinition} satisfies the growth condition $\Wex(F)\to\infty$ for $\det F \to 0$. Moreover, all sublevel sets of $\Wex$ are compact as well as connected, thus $\Wex$ serves as a counterexample to an earlier conjecture by Mielke stating that rank-one convexity of an isotropic function with these properties implies its polyconvexity. As for many other examples of non-polyconvex planar energy functions, it is currently unknown whether the rank-one convex function $\Wex$ is quasiconvex as well. In particular, it remains possible that the conditions of Conjecture \ref{conjecture:mielke} are indeed sufficient to ensure the quasiconvexity of rank-one convex planar functions in general.
%
%
%
%
%
\subsubsection*{Acknowledgements}
The work of I.D.\ Ghiba has been supported by a grant of the Romanian Ministry of Research and Innovation, CNCS--UEFISCDI, project number PN-III-P1-1.1-TE-2019-0397, within PNCDI III.

\section{References}
\footnotesize
\printbibliography[heading=none]

\begin{appendix}

\section{Connectedness of sublevels for Aubert's energy}
\label{appendix:aubertConnectedness}

Although Aubert's example \cite{aubert1987counterexample}
\begin{align*}
	\WA\col\GLp(2)\to\R\,,\quad \WA(F) &= \frac13(\lambda_1^4+\lambda_2^4) + \frac12\lambda_1^2\lambda_2^2 - \frac23(\lambda_1^3\lambda_2+\lambda_1\lambda_2^3)=\frac13\.\norm{F}^4-\frac16\.(\det F)^2-\frac23\.\det F\cdot\norm{F}^2
\end{align*}
of a rank-one convex, non-polyconvex energy function is not suitable as a counterexample to Conjecture \ref{conjecture:mielke} since the sublevel sets $S_c$ are not compact, we will show in the following that every set $S_c$ is indeed connected.

For $c\in\R$, let $F,\Ftilde\in S_c$. As in the proof of Lemma \ref{lemma:connectedness}, we will explicitly construct a continuous curve contained in $S_c$ which connects $F$ and $\Ftilde$. Without loss of generality,\footnote{%
	Since $\WA$ is objective and isotropic, arbitrary $F$ and $\Ftilde$ can be continuously connected to diagonal matrices by curves within level sets of $\WA$ as in the proof of Lemma \ref{lemma:connectedness}.%
}
assume that $F=\diag(\lambda_1,\lambda_2)$ and $\Ftilde=\diag(\lambdatilde_1,\lambdatilde_2)$ are diagonal with $\lambda_1\geq\lambda_2$, $\lambdatilde_1\geq\lambdatilde_2$ and $\lambdatilde_1\geq\lambda_1$. Then the curve
\begin{equation}
	\curve_1\col[\lambda_2,\lambda_1]\to\GLp(2)\,,\qquad \curve_1(s) = \diag(\lambda_1,s)
\end{equation}
is continuous with $\curve_1(\lambda_2)=F$ and, for all $s\in[\lambda_2,\lambda_1]$,
\begin{align}
	\dds\;\WA(\curve_1(s))
	&= \frac43\,s^3 - 2\lambda_1\.s^2 + \lambda_1^2\.s - \frac23\,\lambda_1^3 \leq \frac13\,s^3 + \lambda_1\.s^2 - 2\lambda_1\.s^2 + \lambda_1^2\.s - \frac23\,\lambda_1^3\nonumber\\
	&= \frac13\,s^3 + \lambda_1\.s\,(\lambda_1-s) - \frac23\,\lambda_1^3 \leq \frac13\,s^3 + \frac12\,(\lambda_1^2+s^2)\,(\lambda_1-s) - \frac23\,\lambda_1^3 \label{eq:usedCauchyInequalityFirst}\\
	&= \frac13\,s^3 + \frac12\,(\lambda_1^3+\lambda_1\.s^2-\lambda_1^2\.s-s^3) - \frac23\,\lambda_1^3
	= -\frac16\,s^3 + \frac12\,\lambda_1\.s\,(s-\lambda_1) - \frac16\,\lambda_1^3 \nonumber
	\;<\;0\,, \nonumber
\end{align}
where \eqref{eq:usedCauchyInequalityFirst} holds due to \emph{Cauchy's inequality},
\begin{equation}\label{eq:cauchyInequality}
	x\,y\leq k\,x^2+\frac{y^2}{4\,k} \qquad\text{for all }\; x,y,k>0\,,
\end{equation}
with $x=\lambda_1$, $y=s$ and $k=\frac12$. Therefore, the mapping $s\mapsto \WA(\curve_1(s))$ is monotone decreasing, which implies $\WA(\curve_1(s))\leq\WA(\curve_1(\lambda_2))=\WA(F)\leq c$ for all $s\in[\lambda_2,\lambda_1]$.

Now, let
\[
	\curve_2\col[\lambda_1,\lambdatilde_1]\to\GLp(2)\,,\qquad \curve_2(s) = \diag(s,s)\,.
\]
Then $\curve_2$ is continuous with $\curve_2(\lambda_1)=\curve_1(\lambda_1)$. Furthermore, $\displaystyle W(\curve_2(s)) = -\frac{s^4}{6}\,,$ thus $\WA$ is decreasing along $\curve_2$ as well. Finally, for the continuous curve
\[
	\curve_3\col[0,\lambdatilde_1-\lambdatilde_2]\to\GLp(2)\,,\quad \curve_3(s) = \diag(\lambdatilde_1,\lambdatilde_1-s)\,,
\]
we find $\curve_3(0)=\curve_2(\lambdatilde_1)$ and $\curve_3(\lambdatilde_1-\lambdatilde_2)=\diag(\lambdatilde_1,\lambdatilde_2)=\Ftilde$ as well as
\begin{align}
	\dds\;\WA(\curve_3(s))
	&= \dds\;\left( \frac13(\lambdatilde_1^4+(\lambdatilde_1-s)^4) + \frac12\lambdatilde_1^2\.(\lambdatilde_1-s)^2 - \frac23(\lambdatilde_1^3\.(\lambdatilde_1-s)+\lambdatilde_1\.(\lambdatilde_1-s)^3) \right) \nonumber\\
	&= \frac13\,\lambdatilde_1^3 + \lambdatilde_1^2\.s - 2\lambdatilde_1\.s^2 + \frac43\.s^3 \geq \frac13\,\lambdatilde_1^3 - \lambdatilde_1\.s^2 + \frac43\.s^3 \nonumber\\
	&\geq \frac13\,\lambdatilde_1^3 - s\.\left( \frac13\.\lambdatilde^2 + \frac34\.s^2 \right) + \frac43\.s^3
	= \frac13\,\lambdatilde^2\.(\lambdatilde_1-s) + \left( \frac43-\frac34 \right)\.s^3
	\;>\; 0\,, \label{eq:usedCauchyInequalitySecond} 
\end{align}
where Cauchy's inequality \eqref{eq:cauchyInequality} was again employed in \eqref{eq:usedCauchyInequalitySecond} with $x=\lambdatilde_1$, $y=s$ and $k=\frac13$. Thus $\WA$ is nondecreasing along $\curve_3$, which implies
\[
	\WA(\curve_3(s)) \leq \WA(\curve_3(\lambdatilde_1-\lambdatilde_2)) = \WA(\Ftilde) \leq c
\]
and hence $\curve_3(s)\in S_c$ for all $s\in[0,\lambdatilde_1-\lambdatilde_2]$. By concatenating $\curve_1$, $\curve_2$ and $\curve_3$, we therefore obtain a continuous curve contained in $S_c$ which connects $F$ and $\Ftilde$, cf.\ Figure \ref{fig:curveWithinSublevelsAubert}.

\begin{figure}
	\begin{center}
		\begin{tikzpicture}
			\begin{axis}[
				axis x line=middle,axis y line=middle,
				x label style={at={(current axis.right of origin)},anchor=north, below},
				xlabel={$\lambda_1$}, ylabel={$\lambda_2$},
				xmin=-0.07, xmax=4.9,
				ymin=-0.07, ymax=4.9,
				xtick=\empty,
				ytick=\empty,
				hide obscured x ticks=false,
				width=.49\linewidth,
				height=.39\linewidth
			]
				\addplot[domain=0:4.41,dotted,thick] {x} node[pos=.49, above left] {$\lambda_1=\lambda_2$};
				\addplot[domain=.49:1.47,samples=2,->,ultra thick,blue] ({1.47},{x})
					node[pos=0,draw,circle,fill, inner sep=0.147em]{}
					node[pos=0, below left] {$F$}
					node[pos=.49,right] {$\curve_1$};
				\addplot[domain=1.47:3.43,samples=2,->,ultra thick,black] {x}
					node[pos=.49,below right] {$\curve_2$};
				\addplot[domain=3.43:.98,samples=2,->,ultra thick,red] ({3.43},{x})
					node[pos=1.029,draw,circle,fill, inner sep=0.147em]{}
					node[pos=1.029, below right] {$\Ftilde$}
					node[pos=.49,right] {$\curve_3$};
			\end{axis}
		\end{tikzpicture}
		\vspace*{-1.96em}
	\par\end{center}
	\caption{%
	\label{fig:curveWithinSublevelsAubert}
		\small The energy $\WA$ is decreasing along $\curve_1$ and $\curve_2$ and increasing along $\curve_3$.%
	}
\end{figure}

\end{appendix}

\end{document}